\numberwithin{equation}{section}
\newtheorem{prop}{Proposition}[section]
\newtheorem{theorem}[prop]{Theorem}
\newtheorem{lemma}[prop]{Lemma}
\newtheorem{corollary}[prop]{Corollary}
\newtheorem{remark}[prop]{Remark}
\newtheorem{definition}[prop]{Definition}
\def\begeq{\begin{equation}}
\def\endeq{\end{equation}}
\def\<{\langle}
\def\>{\rangle}
\def\({\left(}
\def\){\right)}
\def\p{\partial}
\def\om{\omega}
\def\Om{\Omega}
\def\we{\wedge}
\def\ep{\epsilon}
\def\dc{dd^c}
\def\MAE{Monge-Amp\`ere equation}
\DeclareMathOperator{\sh}{\mathcal {SH}}
\DeclareMathOperator{\psh}{\mathcal {PSH}}
 \DeclareMathOperator{\la}{\lambda}
\begin{document}

\title{A priori estimates for the complex Hessian
equations}
\author{S\l awomir Dinew and S\l awomir Ko\l odziej}
\date{}
\maketitle
\begin{abstract}
 We prove some $L^{\infty }$ a priori estimates as well as
existence and stability theorems for the weak solutions of the
complex Hessian equations in domains of $\mathbb C^n$ and on
compact K\"ahler manifolds. We also show optimal $L^p$
integrability for $m$-subharmonic functions with compact
singularities, thus partially confirming a conjecture of B\l ocki.
Finally we obtain a local regularity result for $W^{2,p}$
solutions of the real and complex Hessian equations under suitable
regularity assumptions on the right hand side. In the real case
the method of this proof improves a result of Urbas.
\end{abstract}
\section*{Introduction}
\bigskip
\it Hessian equations. \rm  Let $ \la = (\la _1, \la _2 , ... ,
\la _n)$ be the set of eigenvalues of a Hermitian $n\times n$
matrix $A$. By $S_m (A)$ denote the m-th elementary symmetric
function of $\la $:
$$
S_m (A)=\sum _{0<j_1 < ... < j_m \leq n }\la _{j_1}\la _{j_2} 
... \la _{j_m}.
$$
If $A$ is the complex Hessian of a real valued $C^2$ function $u$
defined in $\Om \subset\mathbb C^n$ then we have a pointwise
defined function
$$
\sigma_m (u_{z_j\bar z_k})(z) = S_m\big((u_{z_j\bar z_k}(z))\big).
$$
In terms of differential forms, with $d=\p +\bar{\p }, \  d^c
=i(\bar{\p } -\p )$ and $\beta = dd^c ||z||^2$ this function
satisfies
$$ (dd^c u)^m \we \beta ^{n-m} =  \frac{m!(n-m)!}{n!}\sigma_m
(u_{z_j\bar z_k})\beta ^n. \rm
$$
We call a $C^2$ function $u:\Om \to \mathbb C^n$ \it m-subharmonic
(m-sh) \rm if the forms
$$
(dd^c u)^k \we \beta ^{n-k}
$$
are positive for $k=1, ... , m$ (in particular $u$ is
subharmonic). If $u$ is subharmonic but not smooth then one can
definine $m$-sh function via inequalities for currents (see
definitions in Preliminaries).

As shown by B\l ocki in \cite{Bl} $m$-sh functions are the right
class of admissible solutions to the complex Hessian equation
\begin{equation}\label{hes}
(dd^c u)^m \we \beta ^{n-m} = f\beta
^n \end{equation}
 for given nonnegative function $f$. Observe that for $m=1$ this
 is the Poisson equation and for $m=n$ the complex \MAE .

In analogy to the above one can define \it m-subharmonic functions
with respect to a K\"ahler form $\om $ ($m-\om$-sh) \rm and the
corresponding Hessian equation just replacing $\beta$ with $\om$
in the preceeding definitions. This definition can also be
extended to subharmonic functions. Then one can consider such
functions on K\"ahler manifolds.

Since on compact K\"ahler manifolds the sets of $m-\om$-sh
functions are trivial we define in this case $\om-m$-subharmonic
($\om-m$-sh) functions requiring that
$$
(dd^c u +\om )^k \we \om^{n-k} \geq 0 ,  \ \ k=1, ... , m.
$$
and consider the Hessian equation on a compact K\"ahler manifold
$X$, as in \cite{Hou} and \cite{HMW}:
\begin{equation}\label{hes1}
(dd^c u +\om )^m \we \om^{n-m}=f\om ^n ,  \ \ \int _X f\om ^n
=\int _X  \om ^n .
\end{equation}
Solving the equation we look for $u$ which is $\om-m$-sh. The
normalization of $f$ is necessary because of the Stokes theorem
and the K\"ahler condition $d\om =0.$

\bigskip
\it Background. \rm The real Hessian equation was studied in many
papers, for example in \cite{CNS}, \cite{ITW}, \cite{Kr},
\cite{Tr}, \cite{TW},
\cite{La}, \cite{CW}, \cite{U}; to mention only a few. 
In particular the Dirichlet problem is solvable for smooth and
strictly positive right hand side under natural convexity
assumptions on the boundary of the considered domain (\cite{CNS}).
This result is the starting point of study of degenerate Hessian
equations (\cite{ITW}) and regularity of weak solutions
(\cite{U}). Furthermore a non linear potential theory has been
developed (\cite{TW}, \cite{La}). We refer to \cite{Wa2} for
state-of-the-art survey of the real Hessian equation theory. It is
somewhat interesting  that  real and complex theories are very
different, and attempts to use directly the "real" methods  to the
complex Hessian equation often fail. See \cite{Bl2} or \cite{Bl3}
for a detailed study of those discrepancies.

The complex Hessian equation (\ref{hes}) in domains of $\mathbb
C^n$ was first considered by S.Y. Li \cite{Li}. His main result
says that if $\Om$ is smoothly bounded and $(m-1)$-pseudoconvex
(that means that $S_{j},\ j=1,\cdots, m-1 $ applied to the Levi
form of $\p\Om$ are positive on the complex tangent to $\p\Om$)
then, for smooth boundary data and for smooth, positive right hand
side there exists a unique smooth solution of the Dirichlet
problem for the Hessian equation. The proof is in the spirit of
the one in \cite{CNS}.

In \cite{Bl} B\l ocki considered also weak solutions of the
equation, for possibly degenarate right hand side, introducing
some elements of potential theory for m-sh functions based on
positivity of currents which are used in the definition. He proved
that the m-sh function $u$ is maximal in this class if and only if
$$
(dd^c u)^m \we \beta ^{n-m}=0. $$ Furthermore he  described the
maximal domain of definition of the Hessian operator.

As for the equation on compact K\"ahler manifolds (\ref{hes1}) Hou
\cite{Hou} has shown that the solutions, for smooth positive $f$,
exist under the assumption that the metric has nonnegative
holomorphic bisectional curvature. Similar results were
independently obtained in  \cite{Kok}, \cite{J}.  Despite the
further efforts \cite{HMW} the general case is still open.

\bigskip
\it New results. \rm The $m$-subharmonic functions for  $m<n$ are
much more difficult to handle than the plurisubharmonic ones
($m=n$). They lack a nice geometric description by the mean value
property along planes, there is no invariance of the family under
holomorphic mappings, and so forth. The cones of
m-$\om$-sh functions are even worse - they are not invariant under
translations. Despite that the pluripotential theory methods
developed in \cite{BT2}, \cite{K0}, \cite{K3}, \cite{KI} for the
\MAE\ can be adapted to the Hessian equations. The crucial
estimate between volume and capacity in Proposition
\ref{volcapestimate} allowed us to prove a sharp integrability
statement (conjectured in a stronger form in \cite{Bl}):
m-subharmonic functions, $m<n$,  belong to $L^q $ for any
$q<\frac{mn}{n-m},$ if their level sets are relatively compact in
the domain where they are defined. For a plurisubharmonic function
$u$ much stronger statement is true: $\exp (-au)$ is locally
integrable for some  $c>0$. This  accounts for the difference in
statements of $L^{\infty }$ estimates for the Hessian equations
and the \MAE . We show a priori bounds $L^{\infty }$  for the
solutions of
\begin{equation}\label{hes2}
(dd^c u)^m \we \beta ^{n-m} = f\om ^n \end{equation} (with
continuous boundary data) and those of (\ref{hes1}) with $f$
belonging to $L^q $, $q<\frac{n}{n-m}.$ We also get strong
stability theorems for those solutions. As a consequence one
obtains that the families of solutions corresponding to data
uniformly bounded in $L^q $ norms are equicontinuous.

The a priori estimates lead to the (continuous) solution of the
Dirichlet problem in $(m-1)$-pseudoconvex domains for nonnegative
right hand side in the same $L^q$ spaces as above (Theorem
\ref{weaksol}). The corresponding existence result is also true on
compact K\"ahler manifolds  with nonnegative holomorphic
bisectional curvature (Theorem \ref{weaksolK}). Those are the
extensions of theorems in \cite{Li} and \cite{Hou}. Finally we
prove the local regularity statement in Theorem \ref{lore} which
in the case of the \MAE\ is due to B\l ocki and Dinew \cite{BD}.
It is worth  noting that our methods applied to the  real Hessian
equations yield improvement of the regularity exponent obtained by
Urbas (\cite{U}).

 \section{Preliminaries }
We briefly recall the notions that we shall need later on. We
start with a linear algebra toolkit.

\bigskip
\it Linear algebra preliminaries. \rm Consider the set $\mathcal
M_n$ of all Hermitian symmetric $n\times n$ matrices. For a given
matrix $M\in\mathcal M_n$ let $ \la(M) = (\la _1, \la _2 , ... ,
\la _n)$ be its eigenvalues arranged in the decreasing order and
let
$$
S_k(M)=S_k(\la(M))=\sum _{0<j_1 < ... < j_m \leq n }\la _{j_1}\la _{j_2} 
... \la _{j_m}
$$
be the $k$-th elementary symmetric polynomial applied to the vector $\la(M)$.

Then one can define the positive cones $\Gamma_m$ as follows
\begin{equation}\label{ga}
\Gamma_m=\lbrace \la\in\mathbb R^n|\ S_1(\la)> 0,\ \cdots,\
S_m(\la)> 0\rbrace.
\end{equation}
Note that the definition of $\Gamma_m$ is non linear if $m>1$
hence a priori it is unclear whether these sets are indeed convex
cones. But the vectors in $\Gamma_m$, and hence the set of
matrices with corresponding eigenvalues enjoy several convexity
properties resembling the properties of positive definite
matrices, and
 in partiular the convexity of $\Gamma_m$.

Let now $V$ be a fixed positive definite Hermitian matrix and
$\la_i(V)$ be the eigenvalues of a Hermitian matrix $M$ with
respect to $V$. The we can analogously define the sets
$\Gamma_k(V)$.

Below we list the properties of these cones that will be used later on:
\begin{enumerate}
\item (Maclaurin's inequality) If $\la\in\Gamma_m$ then
$(\frac{S_j}{\binom nj})^{\frac 1j}\geq (\frac{S_i}{\binom ni})^{\frac 1i}$
for $1\leq j\leq i\leq m$;
\item (G\aa rding's inequality, \cite{Ga}) $\Gamma_m$ is a convex cone for
any $ m$ and the function $S_m^{\frac 1m}$ is concave when restriced to
$\Gamma_m$;
\item (\cite{Wa2}) Let $S_{k;i}(\la):=S_k(\la)_{\la_i=0}=
\frac{\partial S_{k+1}}{\partial \la_i}(\la)$. Then for any
$\la,\ \mu\in\Gamma_m$
$$\sum_{i=1}^n\mu_iS_{m-1;i}(\la)\geq mS_m(\mu)^{\frac1m}S_m(\la)^{\frac{m-1}{m}}.$$

\end{enumerate}

We refer to \cite{Bl} or \cite{Wa2} for further properties of these cones.

\bigskip
\it  Potential theoretic aspects of $m$-subharmonic functions. \rm
 Let us fix a relatively compact domain $\Om\in\mathbb C^n$. Let also
 $d=\partial+\bar{\partial}$ and $d^c:=i(\bar{\partial}-\partial)$ be the
 standard exterior differentiation operators. By $\beta:=dd^c||z||^2$ we denote
 the Euclidean K\"ahler form in $\mathbb C^n$.

Given a $\mathcal C^2(\Om)$ function $u$ we call it
$m-\beta$-subharmonic if for any $z\in\Om$ the Hessian matrix
$\frac{\partial^2u} {\partial z_i\partial\bar{z}_j}(z)$ has
eigenvalues forming a vector in the closure of the cone
$\Gamma_m$. Analogously if $\om$ is any other K\"ahler form in
$\Om$, $u$ is $m-\om$-subharmonic if the Hessian matrix has
eigenvalues at $z$ forming a vector in $\bar{\Gamma }_m(\om(z))$
(the latter set will depend on $z$ in general).

Since the $\om=\beta$ is the most natural case in the flat domains
we shall call $m-\beta$-subharmonic functions just $m$-subharmonic
or m-sh for short.

Observe that in the language of differential forms $u$ is $m-\om$-subharmonic
if and only if the following inequalities hold:
$$(dd^cu)^k\we\om^{n-k}\geq 0,\ k=1,\ \cdots,\ m.$$

It was obsreved by B\l ocki (\cite{Bl}) that, following the ideas
of Bedford and Taylor ({\cite{BT}, \cite{BT2}), one can relax the
smoothness requirement on $u$ and develop a non linear version of
potential theory for Hessian operators.

The relevant definitions are as follows:
\begin{definition} Let $u$ be a subharmonic function on a
domain $\Om\in\mathbb C^n$. Then $u$ is called $m$-subharmonic (m-sh for short)
if for any collection of $\mathcal C^2$-smooth m-sh functions
$v_1,\ \cdots,\ v_{m-1}$ the inequality
$$dd^cu\we dd^cv_1\we\cdots\we dd^cv_{m-1}\we\beta^{n-m}\geq 0$$
holds in the weak sense of currents. For a general K\"ahler form
$\om $ the notion of $m-\om$-subharmonic function is defined by
formally stronger condition: locally, in a neighbourhood of any
given point, there exists a decreasing to $u$ sequence of
 $\mathcal C^2$-smooth $m-\om$-sh functions $u_j$ such that for any set of
 $\mathcal C^2$-smooth $m-\om$-sh functions
$v_1,\ \cdots,\ v_{m-1}$ the inequality
$$dd^cu_j\we dd^cv_1\we\cdots\we dd^cv_{m-1}\we\om ^{n-m}\geq 0$$
is satisfied. (For $\om =\beta$ this condition is satisfied due to
part 4 of Proposition \ref{aaaa} below.)

The set of all $m-\om$-sh functions is denoted by $ \sh _m
(\om,\Om)$.
\end{definition}
\begin{remark} It is enough to test $m$-subharmonicity of $u$ against a
collection of $m$-sh
quadratic polynomials (see \cite{Bl}).
\end{remark}

Using the approximating sequence $u_j$ from the definition one can
follow the Bedford and Taylor construction from \cite{BT2} of the
wedge products of currents given by locally bounded $m-\om$-sh
functions. They are defined inductively by
$$dd^cu_1\we\cdots\we dd^cu_p\we\om^{n-m}:=dd^c (u_1\we\cdots\we dd^cu_p\we\om^{n-m}).
$$

 It can be shown (see \cite{Bl}) that
analogously to the pluripotential setting these currents are
continuous under monotone or uniform convergence of their
potentials.

 Here we list some basic facts about $m$-subharmonicity (assuming $\mathcal C^2$\ smoothness).
 \begin{prop}\label{aaaa}
 Let $\Om\subset\mathbb C^n$\ be a domain. Then
 \begin{enumerate}
 \item $\sh _1 (\om,\Om)\subset \sh _2 (\om,\Om)\subset\cdots
 \subset \sh _n (\om, \Om)$,
 \item $\sh _m (\om, \Om)$ is a convex cone,
 \item If $u\in \sh _m (\om, \Om)$\ and $\gamma:\mathbb
 R\rightarrow\mathbb R$\ is a $\mathcal C^2$-smooth convex, increasing
 function then $\gamma\circ u\in \sh _m (\om,\Om)$,
 \item the standard regularizations $u\ast\rho_{\varepsilon}$ of a
 $m$-sh 
 function is again $m$-sh.
 \end{enumerate}
 \end{prop}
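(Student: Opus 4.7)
\bigskip
\textbf{Proof plan.} The plan is to reduce each of (1)--(4) to the pointwise eigenvalue characterization of $m$-subharmonicity: in the $\mathcal C^2$ case, $u\in\sh _m(\om,\Om)$ if and only if at every point the $\om$-eigenvalues of the complex Hessian of $u$ lie in $\bar{\Gamma}_m(\om)$. Claim (1) is then immediate from the nesting $\bar{\Gamma}_m\subset\bar{\Gamma}_{m-1}$, itself obvious from $\Gamma_m=\{S_1>0,\ldots,S_m>0\}$ since adding the further inequality $S_m>0$ only shrinks the set. Claim (2) splits into trivial scale invariance, which holds because $\bar{\Gamma}_m$ is a cone in $\mathbb R^n$, and additivity, which is the matrix-space form of G\aa rding's theorem: the subset $\{M\in\mathcal M_n:\la(M)\in\bar{\Gamma}_m\}$ is a convex cone, a consequence of the hyperbolicity of $S_m$ on $\mathcal M_n$ with respect to the identity.

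For (3) I would begin from the pointwise identity
\[
dd^c(\gamma\circ u)=\gamma''(u)\,du\we d^c u+\gamma'(u)\,dd^c u .
\]
The first summand is a positive $(1,1)$-form, because $du\we d^c u = 2i\,\partial u\we\bar\partial u\geq 0$ and $\gamma''\geq 0$, so its eigenvalue vector lies in $\bar{\Gamma}_n\subset\bar{\Gamma}_m$; the second is the complex Hessian of $u$ rescaled by the nonnegative factor $\gamma'(u)$, so its eigenvalues remain in $\bar{\Gamma}_m$; the convex cone property from (2) then handles the sum. For (4), differentiating $u\ast\rho_\ep(z)=\int u(z-w)\rho_\ep(w)\,dw$ under the integral sign shows that the complex Hessian of $u\ast\rho_\ep$ at $z$ is a probability-weighted average of complex Hessians of $u$ at nearby points, each of which has eigenvalues in $\bar{\Gamma}_m$; the matrix-level convex cone from (2) is closed under such averages, so $u\ast\rho_\ep$ is again $m$-sh.

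The only genuinely nontrivial ingredient, and the one I would scrutinize most carefully, is the passage from G\aa rding's eigenvalue-space convexity of $\Gamma_m$ recorded in the Preliminaries to its matrix-space counterpart, the convexity of $\{M\in\mathcal M_n:\la(M)\in\bar{\Gamma}_m\}$, which is invoked in each of (2)--(4). Once that upgrade is granted, every other step is a routine unpacking of definitions.
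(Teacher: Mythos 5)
Your proposal is correct and follows essentially the same route as the paper, which simply declares (1) trivial, defers (2) to B\l ocki's use of G\aa rding's inequality, and calls (3)--(4) standard adaptations of the plurisubharmonic case; the details you supply (the chain-rule decomposition of $dd^c(\gamma\circ u)$, the averaging of Hessians under convolution) are exactly the standard arguments being alluded to. You also correctly isolate the one real ingredient --- upgrading the convexity of $\Gamma_m\subset\mathbb R^n$ to convexity of the corresponding matrix cone via hyperbolicity of $S_m$ --- which is precisely what the paper's citation of G\aa rding covers, and your translation-based argument for (4) implicitly explains the paper's remark that this property fails for a general K\"ahler form $\om$.
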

 \begin{proof}
 The first claim is trivial.  Second claim is proved in
 \cite{Bl}, with the use of G\aa rding's inequality \cite{Ga}. Last two
 claims are more or less standard and their proofs are analogous to
 corresponding results for $\psh$\ functions.
 Observe that the last property does fail for a general K\"ahler form $\om$.
 \end{proof}

The following two theorems, known as comparison principles in
pluripotential theory, follow essentially from the same arguments
as in the case $m=n$:
\begin{theorem}\label{compprin} Let $u,\ v$ be continuous $m-\om$-sh functions in a domain
$\Om\in\mathbb C^n$. Suppose that
$\liminf_{z\rightarrow\partial\Om} (u-v)(z)\geq0$ then

$$\int_{\lbrace u<v\rbrace}(dd^c v)^{m}\we\om^{n-m}\leq\int_{\lbrace u<v\rbrace}
(dd^c u)^{m}\we\om^{n-m}.$$
\end{theorem}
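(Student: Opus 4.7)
The plan is to imitate the Bedford-Taylor comparison argument, adapting it to $m$-$\om$-sh potentials. The two non-trivial ingredients, both provided by the $m$-potential theory of \cite{Bl}, are (i) locality of the Hessian wedge measure of a continuous $m$-$\om$-sh function on open sets where two potentials coincide, and (ii) a Stokes type identity asserting that two such measures integrate to the same value over a smoothly bounded subdomain provided their potentials agree near the boundary.

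First I would pass to a relatively compact coincidence set. Replacing $u$ by $u_\ep:=u+\ep$ and using the hypothesis $\liminf_{z\to\p\Om}(u-v)\geq 0$, the set $K_\ep:=\{u_\ep\leq v\}$ is compactly contained in $\Om$. Since $\{u_\ep<v\}\nearrow\{u<v\}$ and $\{u_\ep\leq v\}\nearrow\{u<v\}$ as $\ep\searrow 0$, monotone convergence reduces the claim to
\begin{equation*}
\int_{\{u_\ep<v\}}(dd^c v)^m\we\om^{n-m}\leq\int_{\{u_\ep\leq v\}}(dd^c u)^m\we\om^{n-m}
\end{equation*}
for every small $\ep>0$.

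Next I would introduce $w:=\max(u_\ep,v)$, which is continuous and lies in $\sh_m(\om,\Om)$ (stability of the class under pointwise maxima follows from convexity of the G\aa rding cone $\Gamma_m(\om)$ in Proposition~\ref{aaaa}(2), plus a standard regularized-max approximation). By construction $w\equiv u_\ep$ outside $K_\ep$. Locality of the Hessian measure then gives
\begin{equation*}
(dd^c w)^m\we\om^{n-m}=(dd^c v)^m\we\om^{n-m}\ \text{on}\ \{u_\ep<v\},
\end{equation*}
\begin{equation*}
(dd^c w)^m\we\om^{n-m}=(dd^c u)^m\we\om^{n-m}\ \text{on}\ \{u_\ep>v\}.
\end{equation*}
Choosing a smooth $G$ with $K_\ep\subset\subset G\subset\subset\Om$, the continuous function $w-u_\ep\geq 0$ is supported in $K_\ep$, so a Stokes type integration by parts applied to the telescoping identity $(dd^c w)^m-(dd^c u_\ep)^m=dd^c(w-u_\ep)\we\sum_{k=0}^{m-1}(dd^c w)^k\we(dd^c u_\ep)^{m-1-k}$ yields
\begin{equation*}
\int_G(dd^c w)^m\we\om^{n-m}=\int_G(dd^c u)^m\we\om^{n-m}.
\end{equation*}
Splitting the left-hand side over $\{u_\ep<v\}$, $\{u_\ep=v\}$, and $\{u_\ep>v\}$, discarding the nonnegative contribution on the coincidence set, gives the inequality displayed above, and letting $\ep\searrow 0$ concludes.

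The delicate step is the Stokes identity for merely continuous $m$-$\om$-sh potentials, especially because for a general K\"ahler form $\om$ convolution does not preserve $m$-$\om$-subharmonicity (as noted after Proposition~\ref{aaaa}). I would handle this by localizing: in a small coordinate ball $\om$ is uniformly close to a constant Hermitian form, so one can approximate the potentials by smooth functions that are $m$-subharmonic with respect to the constant form with a small controlled loss in $\om$, integrate by parts in the smooth case, and pass to the limit via the continuity of wedge products of continuous $m$-$\om$-sh potentials under uniform convergence established in \cite{Bl}. Once this is in place the remainder of the computation is routine.
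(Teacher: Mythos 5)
Your argument is the standard Bedford--Taylor comparison-principle proof (shift $u$ by $\ep$ to get a relatively compact sublevel set, pass to $w=\max(u+\ep,v)$, use locality of the Hessian measure and a Stokes identity for potentials agreeing near the boundary, then let $\ep\searrow 0$), correctly adapted to the $m$-$\om$-sh setting; this is precisely what the paper means when it says the theorem ``follows essentially from the same arguments as in the case $m=n$''. You also rightly flag the only genuinely delicate point for general K\"ahler $\om$ (integration by parts for merely continuous potentials, where convolution is unavailable) and resolve it via the local decreasing smooth approximations built into the paper's definition, so the proposal matches the intended proof.
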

\begin{theorem}\label{globcompprin}  Let $u,\ v$ be continuous $m-\om$-sh functions in a
domain $\Om\in\mathbb C^n$. Suppose that
$\liminf_{z\rightarrow\partial\Om}(u-v)(z)\geq0$ and
$(dd^cv)^{m}\we\om^{n-m}\geq( dd^cu)^{m}\we\om^{n-m}$. Then $v\leq
u$ in $\Om$.
\end{theorem}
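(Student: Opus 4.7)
The plan is to argue by contradiction, perturbing $v$ slightly and then invoking the first comparison principle (Theorem \ref{compprin}). Suppose that $V := \{u < v\}$ is nonempty. By continuity of $u-v$ and the boundary hypothesis, $V$ is relatively compact in $\Omega$ and $\delta := \sup_V (v-u) > 0$.

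Fix $A > 0$ so large that $||z||^2 - A < 0$ on $\bar\Omega$, and for small $\epsilon > 0$ set
$$v_\epsilon := v + \epsilon\bigl(||z||^2 - A\bigr).$$
Then $v_\epsilon$ is continuous, $v_\epsilon \leq v$, the boundary inequality $\liminf_{z\to\partial\Omega}(u-v_\epsilon) \geq 0$ is preserved, and $v_\epsilon$ is again $m$-$\omega$-subharmonic, since $dd^c v_\epsilon = dd^c v + \epsilon \beta$ and adding the positive definite form $\epsilon\beta$ to $dd^c v$ keeps the eigenvalue vector with respect to $\omega$ inside the closed cone $\bar\Gamma_m(\omega)$. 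For $\epsilon$ sufficiently small, the open set $V_\epsilon := \{u < v_\epsilon\}$ is still nonempty, because $\delta > 0$.

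Applying Theorem \ref{compprin} to the pair $u, v_\epsilon$, together with the hypothesis $(dd^c v)^m\wedge\omega^{n-m} \geq (dd^c u)^m \wedge \omega^{n-m}$, gives
$$\int_{V_\epsilon}(dd^c v_\epsilon)^m\wedge\omega^{n-m} \; \leq \; \int_{V_\epsilon}(dd^c u)^m\wedge\omega^{n-m} \; \leq \; \int_{V_\epsilon}(dd^c v)^m\wedge\omega^{n-m}.$$
On the other hand, binomial expansion of $(dd^c v + \epsilon\beta)^m$ together with the nonnegativity of every mixed current $(dd^c v)^{m-k}\wedge\beta^k\wedge\omega^{n-m}$ yields
$$(dd^c v_\epsilon)^m\wedge\omega^{n-m} \; \geq \; (dd^c v)^m\wedge\omega^{n-m} + \epsilon^m \beta^m\wedge\omega^{n-m}.$$
Combining the two inequalities produces $\epsilon^m\int_{V_\epsilon}\beta^m\wedge\omega^{n-m} \leq 0$, which contradicts the fact that $V_\epsilon$ has positive Lebesgue measure and that $\beta^m\wedge\omega^{n-m}$ is a strictly positive top-degree form.

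The main technical point is to justify the nonnegativity of the mixed currents $(dd^c v)^{m-k}\wedge\beta^k\wedge\omega^{n-m}$ for merely continuous $m$-$\omega$-sh $v$. When $\omega = \beta$ this is immediate from the definition after regrouping factors. For a general K\"ahler form $\omega$ one regularizes $v$ by the decreasing sequence of smooth $m$-$\omega$-sh functions guaranteed by the definition, applies G\aa rding's convexity of the cone $\bar\Gamma_m(\omega)$ to obtain pointwise nonnegativity of the mixed forms at the smooth level, and then passes to the limit using the continuity of wedge products of currents with locally bounded $m$-$\omega$-sh potentials recalled above.
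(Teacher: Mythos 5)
Your argument is correct and is exactly the classical perturbation--plus--comparison-principle proof that the paper invokes without writing out (the paper only remarks that Theorems \ref{compprin} and \ref{globcompprin} ``follow essentially from the same arguments as in the case $m=n$''). The only blemish is the unused claim that $V=\{u<v\}$ itself is relatively compact: the boundary hypothesis only guarantees this for the sets $\{u+\eta<v\}$ with $\eta>0$, and hence for your $V_\epsilon\subset\{u-v<-\epsilon c\}$, which is all the argument actually needs (both for the nonemptiness step and for the finiteness of $\int_{V_\epsilon}(dd^cv)^m\wedge\omega^{n-m}$ used in the cancellation).
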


The last result yields, in particular, uniqueness of bounded weak
solutions of the Dirichlet problem. As for the existence we have
the following fundamental existence theorem due to S. Y. Li
(\cite{Li}):
\begin{theorem}\label{liexist}
Let  $\Om$ be a smoothly bounded relatively compact domain in
$\mathbb C^n$. Suppose that $\partial \Om$ is $(m-1)$-pseudoconvex
(that means that  Levi form at any point  $p\in\partial\Om$ has
its eigenvalues in the cone $\Gamma_{m-1}$). Let  $\varphi$ be a
smooth function on $\partial \Om$ and $f$ a strictly positive and
smooth function in $\Om$. Then the Dirichlet problem
$$\begin{cases} u\in\sh_m(\Om,\beta)\cap \mathcal C(\bar{\Om});\\
(dd^cu)^m\we\beta^{n-m}=f\\
u|_{\partial\Om}=\varphi\end{cases}$$
has a smooth solution $u$.
\end{theorem}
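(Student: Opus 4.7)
The plan is to apply the continuity method in the spirit of Caffarelli--Nirenberg--Spruck \cite{CNS}. First I would extend $\varphi$ smoothly to $\bar\Om$ (still denoted $\varphi$) and, choosing $R$ so that $\Om\Subset\{\|z\|<R\}$ and $A\gg 1$ large enough, set $u^{(0)}(z):=\varphi(z)+A(\|z\|^2-R^2)$. For $A$ large this $u^{(0)}$ is strictly plurisubharmonic, hence strictly $m$-sh and smooth up to $\bar\Om$; it matches $\varphi$ on $\p\Om$ and satisfies $(\dc u^{(0)})^m\we\beta^{n-m}\geq f\beta^n$. Put $f^{(0)}:=\sigma_m(u^{(0)}_{z_j\bar z_k})$ and $f^{(t)}:=tf+(1-t)f^{(0)}$, which is a smooth strictly positive right hand side for every $t\in[0,1]$, and let $T$ be the set of $t$ for which the Dirichlet problem with boundary data $\varphi$ and right hand side $f^{(t)}\beta^n$ has a solution $u^{(t)}\in \mathcal C^{2,\alpha}(\bar\Om)\cap \sh_m(\Om,\beta)$. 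Then $0\in T$, and openness of $T$ inside $[0,1]$ follows from the implicit function theorem in H\"older spaces: the linearization of $u\mapsto \sigma_m(u_{z_j\bar z_k})$ at an admissible $u$ is a linear second-order operator $v\mapsto \sum_{j,k}S_m^{j\bar k}(u_{p\bar q})\,v_{z_j\bar z_k}$ which is elliptic precisely because $\lambda(u_{z_j\bar z_k})\in\Gamma_m$, and it has no zero-order term, so its Dirichlet problem is uniquely solvable.

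Closedness reduces to uniform a priori bounds in $\mathcal C^{2,\alpha}(\bar\Om)$ for the family $\{u^{(t)}\}_{t\in T}$. The $\mathcal C^0$ estimate is immediate: $\max\varphi\ge u^{(t)}$ by subharmonicity, and $u^{(t)}\ge u^{(0)}$ by Theorem~\ref{globcompprin} since $f^{(0)}\ge f^{(t)}$ on $\bar\Om$. The $\mathcal C^1$ estimate can be obtained by a standard Bernstein-type computation applied to $|\nabla u^{(t)}|^2$, testing against the linearized operator, with the boundary gradient controlled by the upper and lower barriers just constructed.

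The heart of the matter is the $\mathcal C^2$ estimate, and I expect this to be the main obstacle. For the interior bound I would apply the linearized operator to an auxiliary function of the form $\log\max_{\|\xi\|=1}u_{\xi\bar\xi}+\phi(|\nabla u|^2)+\psi(u)$; the concavity of $S_m^{1/m}$ on $\Gamma_m$ provided by G\aa rding's inequality (Preliminaries, property (2)) absorbs the troublesome third-order terms, in the manner standard for concave elliptic operators. On the boundary, the double tangential derivatives are recovered by twice differentiating $u=\varphi$ along $\p\Om$, the mixed tangential--normal derivatives by barrier functions of the form $u-u^{(0)}\pm D\cdot(\text{tangential derivative of }u-\varphi)$ plus a small quadratic correction. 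The truly delicate quantity is the pure double normal derivative $u_{\nu\bar\nu}$ at $\p\Om$. Here the $(m-1)$-pseudoconvexity assumption is indispensable: it ensures that the restriction of the complex Hessian of $u$ to the complex tangent space has eigenvalues lying in $\Gamma_{m-1}$, so that the Hessian equation, read as the identity $S_m(u_{z_j\bar z_k})=u_{\nu\bar\nu}\,S_{m-1;\nu}+(\text{tangential part})=f^{(t)}$, can be solved for $u_{\nu\bar\nu}$ with a controlled upper bound, the required positive lower bound on $S_{m-1;\nu}$ coming from Maclaurin's inequality (property (1)) applied to the tangential Hessian.

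Once the $\mathcal C^2(\bar\Om)$ bound holds uniformly in $t$, the equation becomes uniformly elliptic along the family with concave principal part $S_m^{1/m}$, so the Evans--Krylov theorem yields a uniform $\mathcal C^{2,\alpha}(\bar\Om)$ estimate. Standard Schauder bootstrap then upgrades this to $\mathcal C^\infty$, closing the continuity method and producing the desired smooth solution.
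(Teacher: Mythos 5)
This theorem is not proved in the paper at all: it is imported as a known result of S.-Y.~Li \cite{Li}, with only the remark (in the introduction) that Li's proof is ``in the spirit of'' \cite{CNS}. Your outline is precisely that argument --- continuity method starting from the strict subsolution $\varphi+A(\|z\|^2-R^2)$, openness from ellipticity of the linearization $v\mapsto \sum S_m^{j\bar k}v_{z_j\bar z_k}$ on $\Gamma_m$, barriers for the $C^0$ and boundary gradient bounds, interior and boundary second-order estimates, then Evans--Krylov and Schauder --- so it matches the cited source rather than anything written in this paper. The one place where your sketch substitutes a slogan for the actual content is the double normal derivative: $(m-1)$-pseudoconvexity of $\p\Om$ does not by itself place the tangential restriction of the complex Hessian of $u$ in $\Gamma_{m-1}$; what is true is that on $\p\Om$ this restriction equals the tangential Hessian of the boundary data plus $u_\nu$ times the Levi form, and one must establish a \emph{uniform positive lower bound} for $S_{m-1}$ of that matrix before the equation can be solved for $u_{\nu\bar\nu}$. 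This lower bound is the longest step in \cite{CNS} and in Li's complex adaptation, proved by examining the boundary point where the quantity is minimal and playing it off against the strict subsolution; Maclaurin's inequality alone does not yield it. With that caveat the plan is the standard and correct one.
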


Finally let us mention that convexity properties of the cones
$\Gamma_m$ yield the following mixed Hessian inequalities:
\begin{prop} Let $u_1,\ \cdots,\ u_m$ be m-sh $\mathcal C^2$ functions in some
domain $\Om\in\mathbb C^n$. Suppose
$(dd^cu_j)^m\we\beta^{n-m}=f_j$ for some continuous non negative
functions $f_j$. Then
$$dd^cu_1\we\cdots\we dd^cu_m\we\beta^{n-m}\geq(f_1\cdots f_m)^{\frac1m}
\beta^n.
$$
\end{prop}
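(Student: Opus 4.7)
The statement is pointwise, so fix $z_0 \in \Om$ and reduce the problem to a linear algebra inequality for the complex Hessian matrices $H_j := \big(\tfrac{\partial^2 u_j}{\partial z_i \partial\bar z_k}(z_0)\big)$, each of which has eigenvalue vector in $\overline{\Gamma}_m$.

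The first step is the standard identification of mixed wedge products with the Gårding polarization of $S_m$. Namely, if we let $S_m(H_1,\ldots,H_m)$ denote the symmetric multilinear form obtained by fully polarizing $H\mapsto S_m(H)$, then a direct computation (expanding $(dd^c(t_1 u_1 + \cdots + t_m u_m))^m \wedge \beta^{n-m}$ and identifying the coefficient of $t_1\cdots t_m$) gives
\[
dd^c u_1 \we \cdots \we dd^c u_m \we \beta^{n-m} = \frac{m!(n-m)!}{n!}\, S_m(H_1,\ldots,H_m)\,\beta^n
\]
at $z_0$, exactly paralleling the scalar identity recalled in the introduction for $u_1=\cdots=u_m$. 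The hypothesis $(dd^c u_j)^m \wedge \beta^{n-m}=f_j\,\beta^n$ likewise translates to $S_m(H_j) = \frac{n!}{m!(n-m)!} f_j$.

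The second step is to invoke Gårding's inequality in its polarized form: for Hermitian matrices $H_1,\ldots,H_m$ with eigenvalues in $\overline{\Gamma}_m$,
\[
S_m(H_1,\ldots,H_m) \geq S_m(H_1)^{1/m}\cdots S_m(H_m)^{1/m}.
\]
This is a standard consequence of the concavity of $S_m^{1/m}$ on $\Gamma_m$ (item 2 of the linear algebra list): once $S_m^{1/m}$ is known to be concave and positively homogeneous of degree $1$ on an open convex cone, its polarization dominates the geometric mean of its values on the diagonals — one can derive it by induction on $m$ using the iterated G\aa rding-type inequality in item 3, which handles exactly the reduction $S_m(\,\cdot\,,H_2,\ldots,H_m) \geq S_m(H_2,\ldots,H_m, H_2,\ldots,H_m)^{(m-1)/m} S_m(\,\cdot\,)^{1/m}$ needed to peel off one argument at a time.

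Combining the two steps yields at $z_0$
\[
dd^c u_1 \we \cdots \we dd^c u_m \we \beta^{n-m} \;\geq\; \frac{m!(n-m)!}{n!}\cdot\frac{n!}{m!(n-m)!}(f_1\cdots f_m)^{1/m}\,\beta^n \;=\; (f_1\cdots f_m)^{1/m}\,\beta^n,
\]
which is the claim. The only real content is G\aa rding's polarized inequality; the identification of the wedge product with the polarization and the bookkeeping of the combinatorial factor $\frac{m!(n-m)!}{n!}$ are routine. I expect the main (though mild) obstacle to be the inductive extraction of the polarized G\aa rding bound from the single-vector concavity statement recorded in the preliminaries, but this is classical and essentially contained in item 3.
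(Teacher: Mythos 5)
Your proposal is correct and follows essentially the same route as the paper, whose proof is the one-line remark that the statement reduces pointwise to G\aa rding's inequality for the polarized form of $S_m$ (with item 3 of the preliminaries cited for the special case $u_2=\cdots=u_m$). Your more detailed account --- identifying the mixed wedge product with the polarization of $S_m$ and invoking the polarized G\aa rding bound $S_m(H_1,\ldots,H_m)\geq\prod_j S_m(H_j)^{1/m}$ --- is exactly what that remark compresses.
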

\begin{proof} Pointwise this reduces to the G\aa rding inequality; see also
inequality 3. above for the case $u_2=u_3=\cdots=u_m$.
\end{proof}
Later on in Theorem \ref{weak} we shall see that the smoothness assumptions
here can be considerably relaxed.

\bigskip
\it K\"ahler setting. \rm   Given a compact K\"ahler manifold $
(X,\om)$ we can define the cones $\sh_m(X, \om)$ of those
functions u for which, in a local chart $\Om$ where $\om$ has a
potential $\rho$, the function $u+\rho$ belongs to
$\sh_m(\Om,\om)$. The definition is independent of the choice of
the chart and the potential. This essentially allows to carry over
all local reults to this setting. We refer to \cite{K2} for the
plurisubharmonic ($m=n$) case.

The comparison principle on compact manifolds reads as follows:
\begin{prop}Let  $ (X,\om)$  be a compact K\"ahler manifold and $u, v$ be
 continuous functions in $\sh_m(X,\om)$. Then
$$\int_{\lbrace u<v\rbrace}(\om+dd^c v)^{m}\we\om^{n-m}
\leq\int_{\lbrace u<v\rbrace}(\om+dd^c u)^{m}\we\om^{n-m}.$$
\end{prop}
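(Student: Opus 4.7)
The plan is to follow the standard total-mass strategy used in the plurisubharmonic case (\cite{K2}): pass to the upper envelope $w_\varepsilon := \max(u, v-\varepsilon)$, compare total masses of Hessian measures via Stokes, and then let $\varepsilon \to 0$.

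The two ingredients needed are (a) the total-mass identity
$$\int_X (\om + dd^c \varphi)^m \we \om^{n-m} = \int_X \om^n$$
for any continuous $\varphi \in \sh_m(X,\om)$, and (b) the locality property: on any open set where two continuous $\om$-$m$-sh functions coincide, their Hessian measures agree. Identity (a) is obtained by expanding the power binomially and applying Stokes together with $d\om = 0$ in the smooth case, and then approximating a continuous $\om$-$m$-sh function by a decreasing sequence of smooth $\om$-$m$-sh ones (local regularization glued with cut-offs), invoking continuity of the Hessian under monotone convergence from \cite{Bl}. Property (b) is immediate from the inductive Bedford--Taylor--B\l ocki definition of these currents.

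Given these, set $w = w_\varepsilon$ and note that $w \in \sh_m(X,\om)$, since the pointwise maximum of two $\om$-$m$-sh functions is again $\om$-$m$-sh (G\aa rding convexity of $\Gamma_m$, part of Proposition \ref{aaaa}, combined with local regularization). Decompose $X = \{u<v-\varepsilon\} \sqcup \{u=v-\varepsilon\} \sqcup \{u > v-\varepsilon\}$. Applying (a) to both $u$ and $w_\varepsilon$ and using (b) to replace $w_\varepsilon$ by $u$ on the open set $\{u>v-\varepsilon\}$ and by $v-\varepsilon$ on $\{u<v-\varepsilon\}$, one obtains, after subtracting the common piece and discarding the nonnegative integral on the coincidence set $\{u=v-\varepsilon\}$,
$$\int_{\{u<v-\varepsilon\}} (\om+dd^c v)^m \we \om^{n-m} \leq \int_{\{u\leq v-\varepsilon\}} (\om+dd^c u)^m \we \om^{n-m}.$$

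Finally let $\varepsilon \to 0^+$. Both $\{u<v-\varepsilon\}$ and $\{u\leq v-\varepsilon\}$ increase to $\{u<v\}$, so monotone convergence for Borel measures yields the claim. The main technical obstacle is ingredient (a) for merely continuous potentials, since convolution is unavailable on a compact manifold; one needs an $\om$-$m$-sh regularization procedure combined with the monotone continuity of the Hessian operator from \cite{Bl}. Everything beyond that is bookkeeping on sublevel sets of the continuous function $v-u$.
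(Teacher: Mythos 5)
Your argument is correct and is essentially the proof the paper intends: its own justification is a one-line reference to the plurisubharmonic case in \cite{KI}, \cite{K2}, whose proof is precisely this scheme of combining the total-mass identity $\int_X(\om+dd^c\varphi)^m\we\om^{n-m}=\int_X\om^n$ with the envelope $\max(u,v-\varepsilon)$, locality of the Hessian measures on the open sets $\{u<v-\varepsilon\}$ and $\{u>v-\varepsilon\}$, and a monotone limit in $\varepsilon$. The only point worth tightening is the parenthetical justification that $\max(u,v)$ is $\om$-$m$-sh: this needs the monotonicity of $\Gamma_m$ under addition of positive semidefinite matrices (applied to the regularized maximum), not merely convexity of the cone, but that is a standard fact.
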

\begin{proof} One can repeat the proof for psh
functions from \cite{KI} or \cite{K2}.
\end{proof}
Observe that the cones $\Gamma_k(\om)$ are not fixed but according to an
observation of Hou (\cite{Hou}) these are invariant under the parallel transport defined by the Levi-Civita connection associated to $\om$.

\section{$L^{\infty}$ estimates and existence of weak solutions in domains}
In this section we state the results for $0<m<n.$ Let us denote by
$B(a,r)$ the ball in $\mathbb C ^n$ with center $a$ and radius
$r$. Let also $\om $ be a K\"ahler form defined in a neighbourhood
of the closure of a set $\Omega$ considered below and $V=\om ^n $
be the  volume form associated to $\omega$.

Let $ \mathcal{SH} _m (\om , \overline{\Om} )$ denote the class of
$m-\om$-sh functions which are continuous in $\overline{\Om}$.
\begin{prop}\label{volcapestimate} For $p<\frac{n}{n-m}$ and an open set
$\Om\subset
B(0,1)=B$ there exists $C(p)$ such that for any $ \
K\subset\subset \Om $,
$$
V (K) \leq C(p) cap _m^p  (K, \Om ),
$$
where
$$
cap _m (K, \Om )=\sup \{ \int _{K} (dd^c u)^m \we \om ^{n-m}, \
u\in \mathcal{SH} _m (\om , \overline{\Om} ), 0\leq u \leq 1 \}.
$$
\end{prop}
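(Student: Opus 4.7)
I adapt Kolodziej's classical volume-capacity estimate from the complex Monge-Amp\`ere setting \cite{K0} to the Hessian operator, replacing the role of exponential (Alexander-Taylor) integrability, which is unavailable for $m<n$, with sharp polynomial $L^q$-integrability.

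I first introduce the relative extremal function
\[
h^*:=\bigl(\sup\{v\in \sh _m(\om,\Om):\ v\leq 0,\ v|_K\leq -1\}\bigr)^{\!*}.
\]
In analogy with the plurisubharmonic theory, one verifies that $h^*$ is $m$-$\om$-subharmonic with $-1\leq h^*\leq 0$, equal to $-1$ on $K$ outside a Lebesgue-null $m$-polar set, vanishing on $\p\Om$, and that its Hessian mass realises the capacity: $\int_\Om (\dc h^*)^m\we \om^{n-m} = cap_m(K,\Om)=:c$. This follows by transcribing Bedford--Taylor's construction, noting that the standard choquet-type arguments go through since Theorems \ref{compprin} and \ref{globcompprin} hold.

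The core of the argument is to establish the Sobolev-type estimate
\[
\int_\Om |h^*|^q\,dV \ \leq\ C(q)\, c^{q/m}, \qquad q<\tfrac{mn}{n-m}.
\]
Both sides scale as degree $q$ under $h^*\mapsto \la h^*$ (since $(\dc(\la h^*))^m=\la^m(\dc h^*)^m$), which is the natural consistency check. To prove it I would dominate $h^*$ pointwise by a ``Hessian potential'' built from the radial fundamental solution $-|z|^{-2(n-m)/m}$, the $m$-subharmonic function whose Hessian measure is a Dirac mass at the origin (the critical exponent is exactly the one for which the Gårding inequality applied to the eigenvalues $(\alpha, \dots, \alpha, -\alpha^2)/r^{2(\alpha+1)}$ forces $S_m\equiv 0$), and then apply Minkowski's integral inequality. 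The $L^q$ integrability threshold of the kernel $|z|^{-2(n-m)/m}$ on $\co^n$ is precisely $q=mn/(n-m)$, which is responsible for the sharpness of the exponent; equivalently one could invoke a complex analog of the Trudinger--Wang Hessian--Sobolev inequality.

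With the Sobolev estimate in hand the conclusion is immediate: since $|h^*|=1$ Lebesgue-almost everywhere on $K$,
\[
V(K)=\int_K dV \leq \int_\Om |h^*|^q\,dV \leq C(q)\, c^{q/m}.
\]
Setting $p=q/m$ converts $q<mn/(n-m)$ exactly into $p<n/(n-m)$; smaller exponents are reached by H\"older's inequality since $cap_m(K,\Om)$ is bounded on a fixed ball.

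The main obstacle will be the $L^q$ Sobolev inequality above: the nonlinearity of $(\dc\,\cdot\,)^m\we\om^{n-m}$ rules out any direct linear potential representation, so the pointwise comparison with the Green-type kernel has to be handled through a nonlinear potential theory argument (in the spirit of Labutin or via iteration on sublevel sets of $h^*$), and the constant $C(q)$ necessarily blows up as $q\to mn/(n-m)$. The deeper reason the resulting volume-capacity inequality is only polynomial rather than exponential -- in contrast with the $m=n$ Monge--Amp\`ere case -- is that the fundamental $m$-subharmonic singularity for $m<n$ is a power rather than a logarithm.
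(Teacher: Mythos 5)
Your overall architecture --- relative extremal function, a quantitative $L^q$ bound in terms of total Hessian mass, then integration over $K$ where $h^*=-1$ --- is coherent and would yield the proposition, but the load-bearing step, namely
$$\int_\Om |h^*|^q\,dV\ \le\ C(q)\Big(\int_\Om(\dc h^*)^m\we\om^{n-m}\Big)^{q/m},\qquad q<\tfrac{mn}{n-m},$$
is left unproved, and the mechanism you sketch for it cannot work as stated. A pointwise bound of $h^*$ by a \emph{linear} potential $\int|z-w|^{-2(n-m)/m}\,d\mu(w)$ with $\mu=(\dc h^*)^m\we\om^{n-m}$, followed by Minkowski's integral inequality, would give $\|h^*\|_{L^q}\le C\|\mu\|$, which is homogeneous of degree $m$ under $h^*\mapsto\la h^*$, whereas the right-hand side you need is homogeneous of degree $q$; no linear-potential argument can produce the exponent $q/m$. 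The correct substitute is a pointwise Wolff-potential estimate $|h^*(z)|\le C\int_0^R\big(\mu(B(z,t))/t^{2(n-m)}\big)^{1/m}\,dt/t$ combined with Adams--Hedberg mapping theorems; this is precisely Labutin's theorem in the real Hessian setting, and its complex analogue is a substantial theorem in its own right, not a transcription. There is also a circularity risk: the paper obtains the $L^q$ integrability you invoke (Proposition \ref{lpbound}) as a \emph{consequence} of the present volume--capacity estimate, so your Sobolev inequality would have to be established by genuinely independent means.

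The paper's actual proof sidesteps all of this by a short reduction to the Monge--Amp\`ere case: solve $(\dc v)^n=[V(K)]^{2\ep-1}\chi_K\,\om^n$ in $B$ with $v=0$ on $\p B$; the density lies in $L^{1+\ep}$ with norm bounded independently of $K$, so $\|v\|_\infty\le 1/c$ by the Monge--Amp\`ere $L^\infty$ estimate of \cite{K0}; the mixed-measure inequality gives $(\dc v)^m\we\om^{n-m}\ge[V(K)]^{(2\ep-1)m/n}\chi_K\,\om^n$, so $u=cv$ is an admissible competitor and $cap_m(K,\Om)\ge c^m[V(K)]^{(n-m+2m\ep)/n}$, which is the claim with $p=n/(n-m+2m\ep)\to n/(n-m)$ as $\ep\to0$. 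To salvage your route you would either have to import this Monge--Amp\`ere machinery anyway or first prove a complex Hessian Wolff-potential estimate, a much larger undertaking than the proposition itself.
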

\begin{proof} If $V(K)=0$ then the inequality trivially holds. Assume from
now on that $V(K)>0$. Fix any $\ep \in (0,1/2)$ and set $f = [V (K)]^{2\ep -1}
\chi _K ,$ where $\chi _K$ denotes the characteristic function of
the set $K$.  Solve the complex \MAE\ in $B$ to find $v\in
PSH_{\om }(B)\cap C (\overline{\Om} )$ with $v=0$ on $\partial B$
and
$$
(dd^c v )^n =f\om ^n .
$$
By the  inequality between mixed Monge-Amp\`ere measures (see
\cite{K2}, \cite{D1})
\begin{equation}\label{kol1} (dd^c v)^m \we \om ^{n-m} \geq [V (K)]^{(2\ep
-1)\frac{m}{n} } \chi _K \om ^n .
\end{equation}
For $q=1+\ep$
$$
\int _B f^q dV = [V (K)]^{(2\ep -1)(1+\ep) +1} = [V (K)]^{\ep +
2\ep ^2} \leq V (B).
$$
So, by \cite{K0}, there exists $c>0$, independent of $K$ (though dependent on
$\ep$),  such
that $||v|| \leq 1/c .$ Take $u=cv$. Then, using (\ref{kol1})
$$
cap _m (K, \Om )\geq \int _{K} (dd^c u)^m \we \om ^{n-m} \geq c^m
[V (K)]^{(2\ep -1)\frac{m}{n} +1}.
$$
Therefore
$$
V (K) \leq C cap _m^{\frac{n}{n-m+2m\ep } } (K, \Om ),
$$
which proves the claim.

\end{proof}

\begin{prop}\label{lpbound} Let $\Om$ and $p$ be as above and
consider $u\in \mathcal{SH} _m (\om , \overline{\Om} )$ with $u=0
$ on $
\partial\Om$ and
$$
 \int _{\Om } (dd^c u)^m \we \om ^{n-m} \leq 1.
$$
Then for $U(s)= \{ u<-s \}$ we have
$$
cap _m (U(s), \Om )\leq s^{-m}
$$ and
$$
V_{2n} (U(s))\leq C(p) s^{-pm}.$$
 In particular $u\in L^q (\Om )$ for any $q<\frac{mn}{n-m},$ and
this remains true whenever $u$ is bounded in some
neighborhood of the boundary of $\Om .$

\end{prop}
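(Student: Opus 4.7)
The plan is to split the statement into three pieces: the capacity bound, the volume bound, and the $L^q$-integrability (including the extension to functions bounded only near the boundary).

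For the capacity bound, I would test against an arbitrary $w \in \mathcal{SH}_m(\omega, \overline{\Omega})$ with $0 \le w \le 1$ and compare $u$ with $s(w-1)$. Both are continuous $m$-$\omega$-sh in $\Omega$, and on $\partial\Omega$ we have $u = 0 \ge s(w-1)$, so Theorem \ref{globcompprin}'s hypothesis is satisfied. On $U(s) = \{u < -s\}$ we have $s(w-1) \ge -s > u$, hence $U(s) \subset \{u < s(w-1)\}$. The comparison principle (Theorem \ref{compprin}) applied to $u$ and $s(w-1)$ yields
$$s^m \int_{U(s)} (dd^c w)^m \wedge \omega^{n-m} \le \int_{\{u<s(w-1)\}} (dd^c u)^m \wedge \omega^{n-m} \le 1.$$
Taking the supremum over admissible $w$ gives $\operatorname{cap}_m(U(s),\Omega) \le s^{-m}$.

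For the volume bound, observe that continuity of $u$ together with $u|_{\partial \Omega} = 0$ forces $U(s) \subset\subset \Omega$ for $s > 0$. Hence Proposition \ref{volcapestimate} applies on compact exhaustions of $U(s)$ (or directly, after inner approximation), giving
$$V_{2n}(U(s)) \le C(p)\operatorname{cap}_m(U(s),\Omega)^p \le C(p) s^{-mp}.$$
For the $L^q$-integrability I would then use the layer-cake formula
$$\int_\Omega |u|^q \, dV = q\int_0^\infty s^{q-1} V_{2n}(U(s))\, ds,$$
splitting the integral at $s=1$; the part on $[0,1]$ is bounded by $V_{2n}(\Omega)$ and the tail integral converges exactly when $q-1-pm < -1$, i.e.\ $q < pm$. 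Since $p$ can be chosen arbitrarily close to $\tfrac{n}{n-m}$, we obtain $u \in L^q$ for every $q<\tfrac{mn}{n-m}$.

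For the final clause, when $u$ is only assumed to be bounded in some neighbourhood $\Omega\setminus\Omega'$ of $\partial\Omega$, I would reduce to the previous case by a standard gluing trick: choose a slightly larger $\widetilde \Omega \Subset \Omega$ containing $\Omega'$ and replace $u$ by $\max(u,-A + \varepsilon\rho)$ for a sufficiently large constant $A$ and an $m$-$\omega$-sh defining function $\rho$ of $\widetilde\Omega$, so that the modified function coincides with $u$ on $\{u < -A - 1\}$, is bounded, and vanishes on $\partial\widetilde\Omega$; its Hessian mass is controlled by that of $u$ because the two agree on the relevant sub-level sets. The level-set estimates obtained above then transfer to $u$ for all sufficiently large $s$, which suffices for local $L^q$-integrability. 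The main technical point is really the comparison step: one must pick the correct comparison function $s(w-1)$, since a naive use of $w$ itself does not produce the $s^m$ factor; once this choice is identified the rest is bookkeeping.
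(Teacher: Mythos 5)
Your treatment of the capacity bound, the volume bound, and the layer-cake deduction of the $L^q$-integrability is correct, and the capacity step is essentially the paper's argument in a mildly different dress: the paper fixes a compact $K\subset U(s)$, takes a near-extremal competitor $v$ with $-1\le v\le 0$ and applies the comparison principle to the pair $\tfrac{t}{s}u$, $v$, letting $\ep\to 0$ and $t\to 1$ at the end, whereas you keep $u$ fixed and rescale the test function to $s(w-1)$. The two manoeuvres are equivalent (yours is slightly cleaner, avoiding the limiting parameters), and your identification of the rescaling as the key point is accurate. Note only that you invoke the wrong comparison theorem by name at first (the global one) before correctly applying the local one.

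The final clause is where your argument genuinely fails. For $v=\max(u,-A+\varepsilon\rho)$ with $\rho$ bounded, on the set $\{u<-A-1\}$ (indeed on all of $\{u<-A+\varepsilon\inf\rho\}$) one has $-A+\varepsilon\rho>u$, so $v$ equals $-A+\varepsilon\rho$ there, \emph{not} $u$: the maximum truncates $u$ from below and erases exactly the deep sublevel sets whose volume you must estimate. Likewise $v$ does not vanish on $\partial\widetilde\Om$: once $A$ exceeds the bound $M$ for $u$ near the boundary, the maximum near $\partial\widetilde\Om$ is attained by $u$ itself, so $v=u\neq 0$ there. The two properties you want (agreement with $u$ deep inside, prescribed boundary behaviour) would require the inequality between the two branches of the max to go the opposite way, which your choice of constants forbids. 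A workable route is to set $u_j=\max(u,-j)$ for $j>M$; then $u_j=u$ on a neighbourhood of $\partial\Om$ and $\{u_j<-s\}=\{u<-s\}$ for $M<s<j$, so it suffices to bound $\int_{\Om'}(dd^cu_j)^m\we\om^{n-m}$ uniformly in $j$ — a Chern--Levine--Nirenberg-type estimate obtained by integrating by parts against a cutoff whose $dd^c$ is supported where all the $u_j$ coincide with the bounded function $u$ — and then to run the first part of the proposition on each $u_j$ after normalizing. (The paper leaves this clause unproved, so this is a gap relative to what the statement asserts rather than a divergence from the printed proof.)
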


\begin{proof} Fix $\ep >0, t>1$ and $K\subset U(s)$ and find $v\in
\mathcal{SH} _m (\om , \overline{\Om} )$ with $-1\leq v\leq 0$ and
$$
\int _{K } (dd^c v)^m \we \om ^{n-m} \geq cap _m (K, \Om ) -\ep .
$$
Then, using the comparison principle \cite{BT}, \cite{Bl}
$$\aligned
& cap _m (K, \Om ) -\ep \leq \int _{K } (dd^c v)^m \we \om
^{n-m} \\
\leq & \int _{\{-\frac{t}{s} u <v \} } (dd^c v)^m \we \om ^{n-m}
\leq (\frac{t}{s})^m \int _{\Om } (dd^c u)^m \we \om ^{n-m} \leq
(\frac{t}{s})^m .
\endaligned
$$

To finish the proof of the first estimate recall that $cap _m
(U(s), \Om )$ is the supremum of $cap _m (K, \Om )$ over compact
$K\subset U(s)$ and  let $\ep \to 0$ and $t\to 1$. Then the
estimate of the volume follows from Proposition \ref{volcapestimate}.

\end{proof}
\begin{remark}\label{ex} The bound for $q$ above is optimal as the function
$$
G(z) = -|z|^{2-2n/m}
$$
is m-sh and belongs to $L^q _{loc}$ if and only if
$q<\frac{mn}{n-m}.$
\end{remark}

In \cite{Bl} B\l ocki conjuctured that any $m$-sh function belongs
to $ L^q_{loc}  (\Om )$ for any $q<\frac{mn}{n-m}.$ He proved this
for $q<\frac{n}{n-m}.$ The above proposition confirms partially
the conjecture - under the extra assumption of boudedness near the
boundary. Still the question about the local inegrability remains
open.

We now proceed to proving the $L^{\infty} $ a priori estimates for
the Hessian equation with the right hand side controlled in terms
of the capacity.

 \begin{lemma}\label{kollemma} For $p\in (1, \frac{n}{n-m})$ and an
open set $\Om \subset B$ consider $u,v \in  \mathcal{SH} _m (\om ,
\overline{\Om}  )$ satisfying
$$
\int _{K } (dd^c u)^m \we \om ^{n-m} \leq Acap _m^p (K, \Om )
$$
for some $A>0$ and any compact $K\subset \Om$. If the sets
$U(s)=\{u-s <v\}$ are nonempty and relatively compact in $\Om$ for
$s\in (s_0 , s_0 +t_0 )$ then there exists a constant $C(p,A)$
such that
$$
t_0 \leq C(p,A) cap _m^{p/n} (U(s_0 +t_0 ), \Om ) .
$$
\end{lemma}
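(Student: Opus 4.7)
The plan is to derive a Kolodziej-type recursive inequality linking the capacities of $U(s)$ and $U(s+t)$ via the comparison principle, and then to iterate it dyadically in the spirit of \cite{K0}, \cite{K2}. For any test function $w\in\mathcal{SH}_m(\om,\overline{\Om})$ with $-1\leq w\leq 0$ and any $s_0<s<s+t\leq s_0+t_0$, I would form the open set $E:=\{u-s-t<v+tw\}$. Because $U(s_0+t_0)$ is relatively compact in $\Om$ we have $u-s_0-t_0\geq v$ on $\partial\Om$, which together with $tw\leq 0$ forces $u-s-t\geq v+tw$ there, so $E\subset\subset\Om$; the inclusions $U(s)\subset E\subset U(s+t)$ are then immediate from $-1\leq w\leq 0$. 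Applying Theorem \ref{compprin} on $E$ to the pair $(u-s-t,\,v+tw)$ produces
$$\int_E (dd^c(v+tw))^m\we\om^{n-m}\;\leq\;\int_E (dd^c u)^m\we\om^{n-m}.$$

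Next, expand $(dd^c(v+tw))^m$ multinomially; each cross term is a nonnegative current once wedged with $\om^{n-m}$, so the left-hand side dominates $t^m\int_{U(s)}(dd^c w)^m\we\om^{n-m}$, while the right-hand side, by $E\subset U(s+t)$ and the standing hypothesis, is at most $A\,cap_m^p(U(s+t),\Om)$. Taking the supremum over admissible $w$ and over compact $K\subset U(s)$ gives the key recursion
$$t^m\,cap_m(U(s),\Om)\;\leq\;A\,cap_m^p(U(s+t),\Om).\qquad(\ast)$$

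Finally, I would iterate $(\ast)$. Writing $\phi(s):=cap_m(U(s),\Om)$ and reparameterizing by $\psi(r):=\phi(s_0+t_0-r)$ converts $(\ast)$ into $t^m\psi(u+t)\leq A\psi(u)^p$ on $[0,t_0]$ for the decreasing function $\psi$. A dyadic scheme $u_{k+1}=u_k+t_k$ with $\psi(u_{k+1})$ a fixed fraction of $\psi(u_k)$ bounds each $t_k$ by a geometric term in $\psi(u_k)$, and summing yields $t_0\leq C(p,A)\,\psi(0)^\alpha$ with $\psi(0)=cap_m(U(s_0+t_0),\Om)$. The main obstacle is exactly this last step --- extracting the sharp exponent $\alpha=p/n$ stated in the lemma. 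A naive dyadic iteration of $(\ast)$ alone produces only $\alpha=(p-1)/m$, and reaching $p/n$ will require either a finer step-size schedule or, more likely, pairing $(\ast)$ with the volume-capacity estimate of Proposition \ref{volcapestimate} to convert capacity decay into the additional power of $n$. The only subsidiary subtlety lies in the first step, where continuity of $u$ and $v$ on $\overline{\Om}$ and the relative compactness of $\overline{U(s_0+t_0)}$ in $\Om$ must be invoked to place $E$ strictly inside $\Om$ and so to apply Theorem \ref{compprin}.
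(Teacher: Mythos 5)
Your derivation of the capacity recursion is exactly the paper's argument. Your set $E=\{u-s-t<v+tw\}$ is, after dividing through by $t$, the paper's set $V=\{w_2<w_1+\frac1t v\}$ with $w_2=\frac1t(u-s-t)$; the application of the comparison principle, the positivity of the mixed terms $(dd^cv)^j\we(dd^cw)^{m-j}\we\om^{n-m}$ in the multinomial expansion, and the passage from $E\subset U(s+t)$ to the hypothesis on $(dd^cu)^m\we\om^{n-m}$ all reproduce the paper's proof of its inequality (\ref{2}), namely $t^m a(s)\le b(s+t)$ with $a(s)=cap_m(U(s),\Om)$ and $b(s)=\int_{U(s)}(dd^cu)^m\we\om^{n-m}$ (the paper keeps $a$ and $b$ as two separate functions instead of collapsing them into your single recursion $(\ast)$, which matters for how the iteration lemma is quoted, see below). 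One minor point: relative compactness is assumed only for $s\in(s_0,s_0+t_0)$, so you should run the argument for $s+t$ strictly below $s_0+t_0$ and finish by monotonicity of $cap_m(U(\cdot),\Om)$; this is harmless.

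The genuine gap is the one you flag yourself: the final iteration. The paper does not carry this step out explicitly either --- it invokes Lemma 4.3 of \cite{K1} with $h(x)=x^{m(p-1)}$, i.e.\ it feeds the pair of inequalities $t^ma(s)\le b(s+t)$ and $b(s)\le A\,a(s)^p$ into Ko\l odziej's general De Giorgi--type lemma, and that lemma is precisely the ingredient missing from your write-up. Your self-diagnosis is accurate: the straightforward dyadic scheme applied to $(\ast)$ (halving $cap_m(U(\sigma_j),\Om)$ at each step) yields only $t_0\le C\,cap_m^{(p-1)/m}(U(s_0+t_0),\Om)$, and since $(p-1)/m<p/n$ exactly when $p<n/(n-m)$, this is strictly weaker than the stated conclusion throughout the admissible range of $p$. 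Your suggested repair --- inserting Proposition \ref{volcapestimate} into the iteration --- is not what the paper does: that proposition is used only to verify the hypothesis of the present lemma in its applications, not inside its proof. As written, your proposal therefore establishes the lemma only with exponent $(p-1)/m$ in place of $p/n$; that weaker exponent would still suffice for the qualitative consequences drawn later (boundedness, equicontinuity, existence of weak solutions), but it does not prove the statement as formulated. To close the gap you must quote, or reprove in the present two-function form, the iteration lemma from \cite{K1}.
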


\begin{proof}  Using the notation
$$
a(s)=cap _m (U(s), \Om ), \ \ \ b(s) =\int _{U(s) } (dd^c u)^m \we
\om ^{n-m}
$$
we claim that
\begin{equation}\label{2}
t^m a(s)\leq b(s+t) , \ \ \ t\in (0,  s_0 +t_0 -s).
\end{equation}
Indeed, for fixed compact $K\subset U(s)$ take $w_1 \in
\mathcal{SH} _m (\om , \overline{\Om}  ), \  -1\leq w_1 \leq 0$
such that
$$
\int _{K } (dd^c w_1 )^m \we \om ^{n-m} \geq cap _m (K, \Om ) -\ep
.
$$
Then for $w_2 =\frac{1}{t} (u-s-t)$ one readily verifies that
$K\subset V\subset U(s+t)$, where $V= \{ w_2 <w_1 + \frac{1}{t} v
\}.$ So, by the comparison principle

$$\aligned
& cap _m (K, \Om ) -\ep \leq \int _{K } (dd^c
(w_1+\frac{1}{t}v))^m \we \om
^{n-m} \\
\leq & \int _{V } (dd^c (w_1+\frac{1}{t}v))^m \we \om ^{n-m} \leq
\int _{V } (dd^c w_2 )^m \we \om ^{n-m} \leq t^{-m}b(s+t) .
\endaligned
$$
Having (\ref{2}) one proceeds as in the proof of Lemma 4.3 in
\cite{K1} (with $h(x) =x^{m(p-1)}$) to reach the conclusion.

\end{proof}

Coupling this with the volume estimate in Proposition
\ref{volcapestimate} we obtain a priori estimates for the
solutions of Hessian equations with the  right hand side in some
$L^q$ spaces.

 \begin{theorem}\label{stability} Take $q>n/m .$ Then the conjugate $q'$ of $q$
satisfies $q' <n/(n-m)$. Fix $p'\in (q' , n/(n-m))$ and $p=p'/q'
>1.$ Consider $u, v \in \mathcal{SH} _m (\om , \overline{\Om })$ such that $u\geq v $
on $
\partial\Om $,   $\{ u<v\} \neq \emptyset $ and
$$
 (dd^c u)^m \we \om ^{n-m} = f\om ^n
$$
for some $f\in L^q (\Om , dV).$ Then
$$
\sup (v-u) \leq c(p',q, ||f||_{L^q(\Om )} )||(v-u)_+
||_{L^{q'}(\Om )}^{\frac{p}{n+p(m+1)} } , \ \ (v-u)_+ :=\max (v-u,
0).
$$
\end{theorem}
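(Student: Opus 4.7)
My plan is to apply Lemma \ref{kollemma} to the pair $(u,v)$ and then bootstrap its conclusion by a matching \emph{upper} bound on the capacities of the sublevel sets, extracted from $f\in L^q(\Om)$ together with the $L^{q'}$-norm of $(v-u)_+$.

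\emph{Setup.} Put $M:=\sup_\Om(v-u)$, which is strictly positive because $\{u<v\}\neq\emptyset$ and $u,v$ are continuous, and set $U(s):=\{u-s<v\}$ as in Lemma \ref{kollemma}. Continuity together with $u\geq v$ on $\partial\Om$ makes $U(s)$ nonempty for $s>-M$ and relatively compact in $\Om$ for $s<0$. By H\"older's inequality (with exponents $q,q'$) and Proposition \ref{volcapestimate} applied with the exponent $p'<n/(n-m)$,
\[
\int_K (\dc u)^m\we\om^{n-m}\;=\;\int_K f\,\om^n\;\leq\;\|f\|_{L^q}\,V(K)^{1/q'}\;\leq\;C(p')^{1/q'}\|f\|_{L^q}\,\mathrm{cap}_m^{\,p}(K,\Om)
\]
for every compact $K\subset\Om$, with $p=p'/q'\in(1,n/(n-m))$. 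Hence $(\dc u)^m\we\om^{n-m}$ satisfies the hypothesis of Lemma \ref{kollemma} with $A=C(p')^{1/q'}\|f\|_{L^q}$.

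\emph{Reverse bound on capacity.} Write $a(s):=\mathrm{cap}_m(U(s),\Om)$ and $b(s):=\int_{U(s)}(\dc u)^m\we\om^{n-m}$. On $U(s)$ (with $s<0$) one has $v-u>-s>0$, so Chebyshev gives $V(U(s))\leq(-s)^{-q'}\|(v-u)_+\|_{L^{q'}}^{q'}$; reapplying the H\"older step,
\[
b(s)\;\leq\;\|f\|_{L^q}\,V(U(s))^{1/q'}\;\leq\;\|f\|_{L^q}\,(-s)^{-1}\,\|(v-u)_+\|_{L^{q'}}.
\]
The internal inequality $t^m a(s)\leq b(s+t)$, established as formula (2) inside the proof of Lemma \ref{kollemma} whenever $s,s+t$ lie in the admissible range $(-M,0)$ with $s+t\leq 0$, with the choice $t=-s/2$ then yields
\[
a(s)\;\leq\;C\,|s|^{-(m+1)}\,\|(v-u)_+\|_{L^{q'}},\qquad s\in(-M,0),
\]
with $C$ depending on $\|f\|_{L^q}$.

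\emph{Conclusion.} Applying Lemma \ref{kollemma} with $s_0=-M/2$, $t_0=M/4$ gives $M/4\leq C(p,A)\,a(-M/4)^{p/n}$. Substituting the preceding display,
\[
M\;\leq\;C\,M^{-(m+1)p/n}\,\|(v-u)_+\|_{L^{q'}}^{p/n},
\]
which rearranges to $M^{(n+p(m+1))/n}\leq C\,\|(v-u)_+\|_{L^{q'}}^{p/n}$, i.e.\ the claimed exponent $p/(n+p(m+1))$.

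The main obstacle is the reverse capacity estimate $a(s)\lesssim |s|^{-(m+1)}\|(v-u)_+\|_{L^{q'}}$: Proposition \ref{volcapestimate} controls only volume by capacity, not the other way round, so this upper bound cannot come from volume considerations alone. The key insight is that the distribution-type inequality $t^m a(s)\leq b(s+t)$ buried inside the proof of Lemma \ref{kollemma}, once fed with the $L^q$-integrability of $f$ and the Chebyshev consequence of the $L^{q'}$-hypothesis on $(v-u)_+$, provides exactly the needed control, and a balanced choice $t\sim|s|$ produces the sharp exponent.
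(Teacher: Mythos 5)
Your proposal is correct and follows essentially the same route as the paper: verify the hypothesis of Lemma \ref{kollemma} via H\"older's inequality and Proposition \ref{volcapestimate}, apply the lemma to bound $\sup(v-u)$ by a power of the capacity of a sublevel set, and then close the loop with a reverse capacity estimate of the form $\mathrm{cap}_m(U)\lesssim |s|^{-(m+1)}\|f\|_{L^q}\|(v-u)_+\|_{L^{q'}}$. The only (cosmetic) difference is that you obtain this reverse bound by recycling the internal inequality $t^m a(s)\leq b(s+t)$ together with Chebyshev's inequality, whereas the paper rederives the same comparison-principle estimate with an extremal $w$ and uses the pointwise bound $t\chi_{E(t)}\leq (v-u)_+$ before applying H\"older; your version is arguably cleaner, and your explicit choice of parameters avoids the paper's small slip in taking $t=\tfrac12\sup(v-u)$ (for which $E(2t)$ is empty).
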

\begin{proof}
By the H\"older inequality and Proposition \ref{volcapestimate}, for a
compact set
$K\subset \Om$ we have
$$
\int _K f\om ^n \leq ||f||_{q} V(K)^{1/q' } \leq
C(p)||f||_{L^q(\Om )}cap _m^p (K, \Om ).
$$
Therefore, by Lemma \ref{kollemma}, we get for $t=\frac{1}{2} \sup (v-u)$ and
$E(t)= \{ u+t< v \}$
\begin{equation}\label{3ab}
t\leq c(p',q, ||f||_{L^q(\Om )} ) cap _m^{p/n} (E(t), \Om ).
\end{equation}

To shorten notation set $a(t) =cap _m (E(2t), \Om ).$ Take  $w \in
\mathcal {SH} _m (\om , \overline{\Om} ), \  -1\leq w \leq 0$ such
that
$$
\int _{E(2t) } (dd^c w )^m \we \om ^{n-m} \geq \frac{1}{2} a(t).
$$
Observe that for $V=\{ u < tw +v-t \}$ the following inclusions
hold
$$
E(2t) \subset V\subset E (t ).
$$
Applying the comparison principle  we thus get
$$\aligned
\frac{1}{2}a(t) t^m &\leq \int _{E(2t) } [dd^c  (tw +v)]^m \om
^{n-m}\leq
\int _V  (dd^c u )^m \we \om ^{n-m}\\
&\leq\int_{E (t ) }f \, dV.
\endaligned
$$
Hence from the H\"older inequality one infers
$$
a(t) t^{m+1} \leq 2 \int_{\Om }(v-u)_+ f \, dV   \leq
||f||_{L^q(\Om )} ||(v-u)_+ ||_{q'}.
$$
Inserting this estimate into (\ref{3ab}) we arrive at
$$
t\leq c_1 (p',q, ||f||_{L^q(\Om )} ) [||f||_{q} ||(v-u)_+
||_{L^{q'}(\Om )} t^{-m-1}]^{p/n}
$$
and consequently
$$
t\leq  c_2 (p',q, ||f||_{L^q(\Om )} ) ||(v-u)_+ ||_{L^{q'}(\Om )}
^{\frac{p}{n+p(m+1)} } .
$$
\end{proof}
\begin{corollary}\label{linfty}
The last theorem gives a priori $L^{\infty }$ estimate for the
solutions of the Hessian equation (\ref{hes2}) with the right hand
side in $L^q$ and a fixed boundary condition.
\end{corollary}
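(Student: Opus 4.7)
The plan is to derive the $L^{\infty}$ bound for a solution $u\in\sh_m(\om,\bar{\Om})$ of (\ref{hes2}) with continuous boundary data $u|_{\p\Om}=\varphi$ by applying Theorem \ref{stability} to $u$ together with a carefully chosen comparison function $v$. The upper bound on $u$ is immediate from Proposition \ref{aaaa}(1): every $m$-$\om$-sh function is subharmonic, so by the maximum principle $\sup_{\Om}u\le\max_{\p\Om}\varphi$. Hence the only real work lies in bounding $u$ from below.

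For the lower bound I would take $v\equiv c$, where $c\le\min_{\p\Om}\varphi$ is a constant; constants are trivially continuous $m$-$\om$-sh functions, and $u\ge v$ holds on $\p\Om$. If $\{u<v\}=\emptyset$ then $u\ge c$ and we are done. Otherwise Theorem \ref{stability} applies and yields
$$
\sup_{\Om}(v-u)\le C(p',q,\|f\|_{L^q(\Om)})\,\|(v-u)_+\|_{L^{q'}(\Om)}^{\alpha}, \qquad \alpha:=\frac{p}{n+p(m+1)}.
$$
Because $(v-u)_+$ is supported where $v>u$ and is pointwise dominated by $\sup_{\Om}(v-u)$, one has
$$
\|(v-u)_+\|_{L^{q'}(\Om)}\le \sup_{\Om}(v-u)\cdot V(\Om)^{1/q'}.
$$
Substituting into the previous inequality and exploiting that $\alpha<\tfrac{1}{m+1}<1$ (which follows at once from $n>0$), the estimate self-improves into a uniform bound on $\sup_{\Om}(v-u)$ depending only on $p'$, $q$, $\|f\|_{L^q(\Om)}$, $V(\Om)$ and $\|\varphi\|_{L^{\infty}(\p\Om)}$. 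This supplies the desired lower bound $\inf_{\Om}u\ge c-C'$, and combined with the upper bound gives the $L^{\infty}$ estimate.

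There is no serious obstacle: the condition $\alpha<1$ is automatic, so Theorem \ref{stability} is already tailored to produce this closing bootstrap, and the corollary follows by the formal device of testing stability against the constant $m$-$\om$-sh function equal to the infimum of the boundary data.
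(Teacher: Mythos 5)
Your argument is correct, but it takes a genuinely different route from the paper's. The paper chooses $v$ to be the \emph{maximal} function in $\mathcal{SH}_m(\om,\overline{\Om})$ with boundary values $\varphi$ (it exists by B\l ocki's work) and then bounds $\|(v-u)_+\|_{L^{q'}(\Om)}$ by invoking Proposition \ref{lpbound}, i.e.\ the a priori $L^{q'}$ integrability of $u$; no bootstrap is needed there. You instead test Theorem \ref{stability} against the constant $v\equiv c\le\min_{\p\Om}\varphi$, use the trivial bound $\|(v-u)_+\|_{L^{q'}(\Om)}\le \sup_\Om(v-u)\cdot V(\Om)^{1/q'}$, and close the estimate by the self-improvement $M\le C\,M^{\alpha}$ with $\alpha=p/(n+p(m+1))<1/(m+1)<1$; this is legitimate as an a priori estimate because $M=\sup_\Om(v-u)$ is finite to begin with, $u$ being continuous on $\overline{\Om}$ by hypothesis. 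Your version is more self-contained --- it bypasses both the existence of the maximal function and Proposition \ref{lpbound} --- whereas the paper's version reuses machinery it needs anyway and delivers the bound in one step without iteration. One cosmetic point: the upper bound $\sup_\Om u\le\max_{\p\Om}\varphi$ indeed follows from the maximum principle for subharmonic functions, but the cleanest justification is the $k=1$ case of the defining inequalities for $m$-subharmonicity rather than the inclusion chain as printed in Proposition \ref{aaaa}(1), whose indexing runs the other way.
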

Indeed, we apply the theorem for the solution $u$ of $$
 (dd^c u)^m \we \om ^{n-m} = f\om ^n
$$
with given continuous boundary data $\varphi$ and for $v$, which
is the maximal function in $\mathcal{SH} _m (\om , \overline{\Om}
)$ matching the boundary condition (it exists by \cite{Bl}). Then
$u$ is bounded by a constant depending on $\Om , ||\varphi
||=||v||, $ and $||f||_q$ since $||(v-u)_+ ||_{L^{q'}(\Om )}$ is
bounded (Proposition \ref{lpbound}).

\begin{corollary}\label{eq}  The solutions of  the Hessian equation with the
right hand sides uniformly bounded in $L^q$  $q>n/m $ and given
continuous boundary data form an equicontinuous family.
\end{corollary}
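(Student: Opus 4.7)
The plan is to show that the family is precompact in $C(\overline{\Omega})$ equipped with the uniform norm; equicontinuity then follows by the Arzel\`a--Ascoli theorem, the uniform boundedness being already guaranteed by Corollary \ref{linfty}.

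Let $\{u_j\}$ be any sequence from the family, with corresponding right-hand sides $f_j$ satisfying $\|f_j\|_{L^q}\le K$. Since $\Gamma_m\subset\Gamma_1$, each $u_j$ is in particular subharmonic, so the classical $L^p$-compactness for uniformly bounded sequences of subharmonic functions supplies a subsequence (still denoted $\{u_j\}$) converging in $L^1_{\mathrm{loc}}(\Omega)$. Combined with the uniform $L^{\infty}$ bound from Corollary \ref{linfty} and the boundedness of $\Omega$, this promotes to convergence in $L^{q'}(\Omega)$ by a routine truncation argument near $\partial\Omega$.

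Next I would apply Theorem \ref{stability} to the pair $(u_j,u_k)$ and then symmetrically to $(u_k,u_j)$: the hypotheses are met because both functions share the continuous boundary data $\varphi$ on $\partial\Omega$ and each satisfies the Hessian equation with $L^q$ right-hand side of norm at most $K$. This yields
$$\|u_j-u_k\|_\infty \le C(K)\,\|u_j-u_k\|_{L^{q'}(\Omega)}^{p/(n+p(m+1))},$$
with the exponent as in Theorem \ref{stability}. Hence the subsequence is Cauchy in $C(\overline{\Omega})$ and converges uniformly. Every sequence from the family therefore admits a uniformly convergent subsequence, which is precompactness in $C(\overline{\Omega})$, and equicontinuity follows.

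The main subtlety is the passage from $L^1_{\mathrm{loc}}(\Omega)$ compactness to $L^{q'}(\Omega)$ compactness: here one must exploit both the uniform $L^{\infty}$ bound from Corollary \ref{linfty} and the fact that all $u_j$ share the same continuous boundary values in order to control the contribution of $\|u_j-u_k\|_{L^{q'}(\Omega)}$ near $\partial\Omega$ uniformly in $j,k$. Once this is in hand the argument is a direct consequence of the stability estimate, so the heart of the matter has already been done in Theorem \ref{stability}.
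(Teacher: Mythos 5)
Your argument is correct, but it takes a genuinely different route from the one the paper points to. The paper's proof is simply the reference to \cite{K2}, p.~35, where equicontinuity is obtained \emph{directly and quantitatively}: one compares a solution $u$ with its small translates $u(\cdot+h)$ (corrected by a constant that tends to $0$ with $|h|$ so as to fix the boundary inequality), feeds the resulting smallness of $\|(u(\cdot+h)-u)_+\|_{L^{q'}}$ --- which is uniform over the family thanks to the standard $L^1$ gradient bound for uniformly bounded subharmonic functions --- into the stability estimate of Theorem \ref{stability}, and reads off an explicit uniform modulus of continuity. You instead prove \emph{relative compactness} in $C(\overline{\Omega})$: extract an $L^1_{\mathrm{loc}}$-convergent subsequence by the classical compactness theorem for uniformly bounded subharmonic functions, upgrade to $L^{q'}(\Omega)$ convergence (here, contrary to what you suggest, the shared boundary data is not needed --- the uniform $L^\infty$ bound of Corollary \ref{linfty} together with $V(\Omega\setminus K)\to 0$ already handles the collar), apply Theorem \ref{stability} symmetrically to pairs $(u_j,u_k)$ to get a Cauchy sequence in the sup norm, and invoke the converse direction of Arzel\`a--Ascoli. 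Both arguments are sound; two small points you should make explicit are that the constant $c(p',q,\|f\|_{L^q})$ is uniform over the family because the $L^q$ norms are bounded by a fixed $K$, and that the case $\{u_j<u_k\}=\emptyset$ is trivial. The trade-off is that the paper's route yields an effective modulus of continuity depending only on $\Omega$, $\varphi$ and $K$, whereas yours is soft and gives no rate; on the other hand yours avoids the boundary bookkeeping of the translation argument and works verbatim whenever a stability estimate of the type of Theorem \ref{stability} is available.
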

For the proof follow \cite{K2} p. 35, which deals with the
Monge-Amp\`ere case.

\bigskip
Below we state yet another stability theorem which we shall need
later. Given the estimates we have already proven its proof
follows the arguments  from \cite{K0}.
\begin{theorem}\label{stabilitybis}
 Let $q>n/m $. Consider $u, v\in \mathcal{SH} _m (\om , \overline{\Om} )$ such that
   $\{ u<v\} \neq \emptyset $ and
$$
 (dd^c u)^m \we \om ^{n-m} = f\om ^n,\  (\dc v)^m\we\om^{n-m}=g\om^n
$$
for some $f, g\in L^q (\Om , dV).$ Then
$$
\sup_\Om (v-u) \leq sup_{\partial{\Om}}(v-u)+ c(q,m,n,diam(\Om)
)||f-g||_{L^{q}(\Om )}^{1/m}.
$$
\end{theorem}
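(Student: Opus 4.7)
The plan is to derive the bound from the global comparison principle (Theorem \ref{globcompprin}) applied to $u$ and a perturbation $v+h$, where $h$ is an auxiliary $m$-subharmonic function absorbing the discrepancy $f-g$. After replacing $u$ by $u+\sup_{\partial\Om}(v-u)$, which does not alter its Hessian measure, I may assume $v\le u$ on $\partial\Om$, so that it suffices to prove
\begin{equation*}
\sup_\Om(v-u)\le c(q,m,n,\mathrm{diam}(\Om))\,\|f-g\|_{L^q(\Om)}^{1/m}.
\end{equation*}

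The second step is to solve the Dirichlet problem
\begin{equation*}
(dd^c h)^m\we\om^{n-m}=|f-g|\,\om^n,\qquad h\in\mathcal{SH}_m(\om,\overline{\Om}),\qquad h\big|_{\partial\Om}=0.
\end{equation*}
Existence is obtained by approximating $|f-g|$ from above by smooth strictly positive data, producing classical solutions via Theorem \ref{liexist}, and passing to the limit; this is legitimate because Corollary \ref{linfty} supplies uniform $L^\infty$ bounds and the Hessian operator is continuous under uniform convergence. The decisive point is the homogeneous form of the bound,
\begin{equation*}
\|h\|_\infty\le C(q,m,n,\mathrm{diam}(\Om))\,\|f-g\|_{L^q(\Om)}^{1/m},
\end{equation*}
which reflects the $m$-homogeneity of the Hessian operator: replacing $h$ by $\lambda h$ multiplies the right-hand side by $\lambda^m$, so any homogeneous $L^\infty$ estimate is forced to carry the exponent $1/m$, precisely as in the Monge-Amp\`ere stability of \cite{K0}. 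The maximum principle applied to the subharmonic function $h$ with zero boundary data also gives $h\le 0$ in $\Om$.

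The payoff now comes from the mixed Hessian inequality of the Proposition stated above: applying it with $k$ copies of $v$ and $m-k$ copies of $h$ to each term of the binomial expansion of $(dd^c(v+h))^m$ yields
\begin{equation*}
(dd^c(v+h))^m\we\om^{n-m}\ge\bigl(g^{1/m}+|f-g|^{1/m}\bigr)^m\om^n\ge (g+|f-g|)\,\om^n\ge f\,\om^n=(dd^c u)^m\we\om^{n-m}.
\end{equation*}
On $\partial\Om$ we have $v+h=v\le u$, so Theorem \ref{globcompprin} gives $v+h\le u$ throughout $\Om$; therefore $\sup_\Om(v-u)\le -\inf_\Om h\le\|h\|_\infty$, which together with the scaled $L^\infty$ bound is the desired estimate.

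The principal obstacle is making the two ingredients rigorous outside the smooth category. The mixed Hessian inequality invoked above is stated only for $\mathcal C^2$ potentials, and Corollary \ref{linfty} does not spell out the explicit $\|f\|_q^{1/m}$ dependence. Both points are handled by the standard approximation: replace $f$ and $g$ by smooth strictly positive $f_j$, $g_j$ converging to them in $L^q$, obtain smooth solutions $u_j$, $v_j$ via Theorem \ref{liexist}, run the comparison argument above for these smooth surrogates, and then pass to the limit by means of the weak continuity of Hessian operators under uniform convergence together with the first stability estimate of Theorem \ref{stability}. This is exactly the adaptation of the Kolodziej scheme from \cite{K0} to which the authors defer.
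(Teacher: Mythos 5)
Your proposal follows exactly the scheme the authors themselves point to --- the Monge--Amp\`ere stability argument of \cite{K0}: normalize so that $v\le u$ on $\partial\Om$, solve $(\dc h)^m\we\om^{n-m}=|f-g|\,\om^n$ with zero boundary data, obtain $\|h\|_\infty\le C\|f-g\|_{L^q}^{1/m}$ from the $m$-homogeneity of the operator combined with the $L^\infty$ estimate for data of unit $L^q$ norm, and compare $v+h$ with $u$ via Theorem \ref{globcompprin}. The skeleton and the key estimates are right. Two points should be tightened. First, the mixed-Hessian step is where you lean on machinery that is either too strong or potentially circular: the Proposition from Section 1 needs $C^2$ potentials, while its weak version (Theorem \ref{weak}) is proved in the paper \emph{using} a stability statement of exactly the kind you are establishing; and your proposed repair --- approximating $u$ and $v$ by smooth solutions via Theorem \ref{liexist} --- requires solvability of the Dirichlet problem in $\Om$, which is not available for the general domain of the statement. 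None of this is needed: in the binomial expansion of $(\dc(v+h))^m\we\om^{n-m}$ every mixed term $(\dc v)^k\we(\dc h)^{m-k}\we\om^{n-m}$ is a positive measure by the very construction of these currents for locally bounded $m$-sh functions, so superadditivity gives $(\dc(v+h))^m\we\om^{n-m}\ge(\dc v)^m\we\om^{n-m}+(\dc h)^m\we\om^{n-m}=(g+|f-g|)\,\om^n\ge f\,\om^n$, which is all the comparison principle requires. Second, the auxiliary function $h$ should be produced on a ball $B\supset\Om$ (extending $|f-g|$ by zero) rather than on $\Om$ itself, again because $\Om$ need not admit solutions of the Dirichlet problem; this is also where the $\mathrm{diam}(\Om)$ dependence of the constant enters, through rescaling to the unit ball in Proposition \ref{volcapestimate}.
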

\begin{remark} The analogous stability theorem for the real m-Hessian equation
($m<n/2$) can be found in \cite{Wa2}, Theorem 5.5 (see also \cite{CW}). There the
 optimal exponent $q$  is equal to $n/2m$.
\end{remark}

Next we obtain a theorem on the existence of weak, continuous
solutions when $\om =\beta$ and the right hand side is in $L^q , \
q>n/m .$
\begin{theorem}\label{weaksol} Let $\Om$ be smoothly bounded (m-1)-pseudoconvex
domain (as in Theorem  \ref{liexist}). Then for $q>n/m $, $f\in
L^q (\Om , dV)$ and continuous $\varphi$ on $\partial\Om$ there
exists $u\in \mathcal{SH} _m (\om , \overline{\Om} )$ satisfying
$$
 (dd^c u)^m \we \beta ^{n-m} = f\beta ^n
$$
and $u=\varphi$ on $\partial\Om$.
\end{theorem}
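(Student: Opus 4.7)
The plan is the standard approximation argument: smooth the data, apply S.~Y.~Li's existence theorem (Theorem \ref{liexist}) to obtain classical solutions, use the stability estimate (Theorem \ref{stabilitybis}) to show that these form a Cauchy sequence in $C(\overline{\Om})$, and pass to the limit with the help of the continuity of the Hessian operator under uniform convergence.

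Concretely, I would first extend $f$ by zero to all of $\mathbb{C}^n$, convolve with a standard mollifier $\rho_{1/j}$, and add $1/j$, producing smooth strictly positive $f_j$ on $\overline{\Om}$ with $f_j\to f$ in $L^q(\Om)$; at the same time I would approximate $\varphi$ uniformly on $\partial\Om$ by smooth $\varphi_j$. Since $\Om$ is smoothly bounded and $(m-1)$-pseudoconvex, Theorem \ref{liexist} supplies classical $m$-sh solutions $u_j\in \mathcal{C}^\infty(\overline{\Om})$ of
$$(\dc u_j)^m\we\beta^{n-m}=f_j\,\beta^n,\qquad u_j|_{\partial\Om}=\varphi_j.$$

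Applying Theorem \ref{stabilitybis} symmetrically to the pairs $(u_j,u_k)$ and $(u_k,u_j)$ gives
$$\sup_{\Om}|u_j-u_k|\;\leq\;\sup_{\partial\Om}|\varphi_j-\varphi_k|\;+\;c(q,m,n,\mathrm{diam}(\Om))\,\|f_j-f_k\|_{L^q(\Om)}^{1/m}.$$
Both terms on the right tend to zero as $j,k\to\infty$, so $\{u_j\}$ is Cauchy in $C(\overline{\Om})$ and converges uniformly to some $u\in C(\overline{\Om})$. As a uniform limit of smooth $m$-sh functions, $u\in \mathcal{SH}_m(\beta,\overline{\Om})$, and clearly $u|_{\partial\Om}=\varphi$. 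The continuity of $v\mapsto(\dc v)^m\we\beta^{n-m}$ under uniform convergence of bounded $m$-sh functions (Bedford--Taylor in the plurisubharmonic case, transferred to the $m$-sh setting in \cite{Bl}) then yields $(\dc u_j)^m\we\beta^{n-m}\to (\dc u)^m\we\beta^{n-m}$ weakly as measures, while $f_j\beta^n\to f\beta^n$ weakly in $L^1$, which forces $(\dc u)^m\we\beta^{n-m}=f\beta^n$.

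The main obstacle is really just the stability step: one must verify that the constant in Theorem \ref{stabilitybis} depends only on $q,m,n,\mathrm{diam}(\Om)$, and not on $\|f_j\|_{L^q}$, so that the Cauchy estimate above is genuinely uniform in $j,k$. Once this is granted, the $L^{\infty}$ bound of Corollary \ref{linfty} and the equicontinuity of Corollary \ref{eq} are not needed as separate ingredients: they are built into the stability estimate applied to the approximating sequence, which simultaneously controls oscillation in terms of the $L^q$ norm of $f_j-f_k$ and the uniform distance $\|\varphi_j-\varphi_k\|_\infty$.
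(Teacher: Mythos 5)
Your argument is correct and reaches the same conclusion by the same overall scheme (mollify $f$ and $\varphi$, invoke Li's Theorem \ref{liexist} for the smooth approximants, pass to the limit using continuity of the Hessian operator under uniform convergence), but the convergence mechanism you use is genuinely different from the paper's. The paper does not make the sequence Cauchy: it observes that the $u_j$ are uniformly bounded (Corollary \ref{linfty}) and equicontinuous (Corollary \ref{eq}), extracts a uniformly convergent subsequence by Arzel\`a--Ascoli, and passes to the limit. You instead apply the stability estimate of Theorem \ref{stabilitybis} symmetrically to the pairs $(u_j,u_k)$, which gives
$$\sup_{\Om}|u_j-u_k|\leq \sup_{\partial\Om}|\varphi_j-\varphi_k|+c\,\|f_j-f_k\|_{L^q(\Om)}^{1/m},$$
so that the \emph{whole} sequence converges uniformly, with a quantitative rate, and the varying boundary data $\varphi_j$ are handled transparently by the boundary term (a point the paper's appeal to Corollary \ref{eq}, which is stated for fixed boundary data, glosses over). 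The trade-off is which black box you lean on: your route rests entirely on Theorem \ref{stabilitybis}, whose proof the paper only sketches by reference to \cite{K0} (and you rightly flag that one must check the constant there depends only on $q,m,n,\mathrm{diam}(\Om)$ and not on the $L^q$ norms of the data), whereas the paper's route rests on Theorem \ref{stability} (proved in full) plus the equicontinuity of Corollary \ref{eq} (itself proved by reference to \cite{K2}). Both are legitimate within the paper's framework; yours is arguably the cleaner limit argument once the stability theorem is granted.
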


\begin{proof} For smooth, positive $f$ this is the result of Li
\cite{Li} (Theorem \ref{liexist}). With our assumptions we
approximate $f$ in $L^q (\Om , dV)$ by smooth positive $f_j$ and
approximate uniformly $\varphi$ by smooth $\varphi _j$ . The
solutions $u_j$ corresponding to $f_j , \varphi _j$ are
equicontinuous and uniformly bounded (Corollaries \ref{linfty},
\ref{eq}). Thus we can pick up a subsequence converging uniformly
to some $u\in \mathcal{SH} _m (\om , \overline{\Om} )$. By the
convergence theorem  $u$ solves the equation.

\end{proof}
\begin{remark} Observe that for $\om=\beta$, the plurisubharmonic
function $u(z)=log||z||$ has a $m$-Hessian density in $L^p$ for
any $p<n/m$ which shows that the exponent $n/m$ is optimal.
\end{remark}

Equipped with the existence and stability of weak solutions we can
also prove the  weak G\aa rding inequality announced in Section 1:
\begin{theorem}\label{weak}
 Let $u_1,\ \cdots,\ u_m$ be locally bounded m-sh  functions in some
domain $\Om\in\mathbb C^n$. Suppose $(dd^cu_j)^m\we\beta^{n-m}=
f_j\beta^n$ for some nonnegative functions $f_j\in L^q(\Om),\
q>n/m$. Then
$$dd^cu_1\we\we\cdots\we dd^cu_m\we\beta^{n-m}\geq(f_1\cdots f_m)^{\frac1m}
\beta^n.
$$
\end{theorem}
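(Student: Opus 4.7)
The strategy is to reduce to the pointwise (smooth) G\aa rding inequality of the preceding proposition by approximating the $u_j$ with smooth solutions of auxiliary Dirichlet problems, and then passing to the limit with the help of the stability theorem. Since the conclusion is an inequality of positive Borel measures, it suffices to prove it locally on a smoothly bounded strictly pseudoconvex (hence $(m-1)$-pseudoconvex) domain $D\subset\subset\Omega$, which I take to be a small ball on which each $u_j$ is bounded.

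The approximation step goes as follows. Choose smooth strictly positive $f_j^{(k)}$ with $f_j^{(k)}\to f_j$ in $L^q(D)$, and smooth boundary data $\varphi_j^{(k)}$ on $\partial D$ converging uniformly to $u_j|_{\partial D}$ (for example, restrictions to $\partial D$ of the standard mollifications $u_j\ast\rho_{1/k}$ defined on a slightly larger neighbourhood, which by Proposition~\ref{aaaa}(4) are smooth $m$-sh and decrease to $u_j$). By Li's theorem (Theorem~\ref{liexist}) there exist smooth $m$-sh solutions $w_j^{(k)}$ of
\[
(dd^c w_j^{(k)})^m \we \beta^{n-m} = f_j^{(k)}\,\beta^n \text{ in } D,\qquad w_j^{(k)}|_{\partial D}=\varphi_j^{(k)}.
\]
As the $w_j^{(k)}$ are smooth, the pointwise G\aa rding inequality from the preceding proposition applies, giving
\[
dd^c w_1^{(k)} \we \cdots \we dd^c w_m^{(k)} \we \beta^{n-m} \geq \bigl(f_1^{(k)}\cdots f_m^{(k)}\bigr)^{1/m}\beta^n.
\]

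For the passage to the limit $k\to\infty$, Theorem~\ref{stabilitybis} gives the Cauchy estimate
\[
\sup_D|w_j^{(k)}-w_j^{(\ell)}| \leq \sup_{\partial D}|\varphi_j^{(k)}-\varphi_j^{(\ell)}| + C\,\|f_j^{(k)}-f_j^{(\ell)}\|_{L^q(D)}^{1/m},
\]
so $w_j^{(k)}$ converges uniformly on $\bar D$ to a continuous $m$-sh function $\tilde u_j$ with $(dd^c\tilde u_j)^m\we\beta^{n-m}=f_j\beta^n$ and $\tilde u_j|_{\partial D}=u_j|_{\partial D}$; by the comparison principle (Theorem~\ref{globcompprin}) $\tilde u_j=u_j$ on $D$. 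The left-hand side of the G\aa rding inequality then converges weakly to $dd^c u_1\we\cdots\we dd^c u_m\we\beta^{n-m}$ by the continuity of the mixed Hessian operator under uniform convergence of locally bounded $m$-sh functions (cf.\ \cite{Bl}). For the right-hand side, after extracting a subsequence with $f_j^{(k)}\to f_j$ a.e., we have $(f_1^{(k)}\cdots f_m^{(k)})^{1/m}\to(f_1\cdots f_m)^{1/m}$ a.e., dominated by $\tfrac1m(f_1+\cdots+f_m)\in L^q$ via AM--GM, so Fatou's lemma preserves the inequality in the limit.

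The main obstacle is ensuring that $\tilde u_j=u_j$, which requires $u_j$ to be continuous up to $\partial D$ so that Theorem~\ref{stabilitybis} and the comparison principle may be applied. Since $f_j\in L^q$ with $q>n/m$, local continuity of $u_j$ follows from Corollary~\ref{eq} upon identifying $u_j$ on slightly smaller subballs with the unique continuous Dirichlet solution of $(dd^c v)^m\we\beta^{n-m}=f_j\beta^n$ produced by Theorem~\ref{weaksol}. The identification proceeds by approximating $u_j|_{\partial D}$ from above by continuous functions and applying the comparison principle in both directions; this bootstrap is the technical heart of the argument, but is straightforward once the $L^{\infty}$ estimate of Corollary~\ref{linfty} and the stability result are in hand.
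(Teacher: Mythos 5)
Your overall strategy -- localize, solve auxiliary Dirichlet problems with smooth positive data via Li's theorem, apply the pointwise G\aa rding inequality to those smooth solutions, and pass to the limit using stability and the continuity of the mixed Hessian currents -- is exactly the paper's strategy. Your treatment of the right-hand side (a.e.\ convergence plus AM--GM domination and Fatou) is fine, indeed more explicit than the paper's. The problem is in how you organize the limit.

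You run a \emph{single} limit in $k$ in which both the densities $f_j^{(k)}$ and the boundary data $\varphi_j^{(k)}$ vary, and you need $\varphi_j^{(k)}\to u_j|_{\partial D}$ \emph{uniformly} in order to invoke Theorem \ref{stabilitybis} and conclude that the $w_j^{(k)}$ converge uniformly to something you can identify with $u_j$. But the theorem only assumes the $u_j$ are locally bounded m-sh, hence upper semicontinuous and possibly discontinuous; the mollifications $u_j\ast\rho_{1/k}$ decrease to $u_j$ and by Dini their restrictions to $\partial D$ converge uniformly only if $u_j|_{\partial D}$ is already continuous. So the uniform convergence you assert presupposes a continuity statement that is not among the hypotheses. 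You acknowledge this and propose to bootstrap continuity of $u_j$ from Corollary \ref{eq} by identifying $u_j$ with the continuous Dirichlet solution on small balls, but that identification itself needs continuous boundary data and a comparison principle valid for merely bounded (non-continuous) m-sh functions -- Theorems \ref{compprin} and \ref{globcompprin} are stated for continuous ones -- and handling the boundary behaviour of the usc function $u_j$ there (one only gets $\limsup_{z\to\zeta}v(z)\le\phi_k(\zeta)\downarrow u_j(\zeta)$, which does not immediately yield $\liminf_{z\to\zeta}(u_j-v)\ge 0$). This is a genuine regularity theorem, not a routine remark, and your proof leans on it.

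The paper avoids the issue entirely by splitting the limit in two: first it fixes the boundary data at a mollification level $j$ (so the data $u_{i,j}$ are smooth, hence continuous, and stability applies cleanly) and lets only the densities $f_{i,k}\to f_i$ in $L^q$, obtaining uniform convergence $v_{i,j,k}\to v_{i,j}$; then it lets $j\to\infty$, where the boundary data merely \emph{decrease} to $u_i$, so by the comparison principle the $v_{i,j}$ decrease as well, and one uses the continuity of the Hessian currents under \emph{monotone decreasing} (not uniform) convergence to pass to the limit. If you restructure your argument this way, no continuity of $u_j$ is ever needed and the gap disappears.
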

\begin{proof} We can essentially follow the lines of the proof of the
analogous result for psh functions from \cite{KI} (see also
\cite{K2}). First observe that the inequality is purely local
hence it suffices to prove it under the additional assumptions
that $\Om$ is a ball and all the functions $u_i$ are defined in a
slightly bigger ball. Hence one can use  convolutions with
smoothing kernel to produce a decreasing to $u_i$ sequence of m-sh
functions $\lbrace u_{i,j}\rbrace_{j=1}^{\infty}$ (cf. Proposition
\ref{aaaa}). Then given any collection of smooth  positive
functions $ f_{i,k}\in L^q(\Om),\ q> n/m$ by \cite{Li} we can
solve  the Dirichlet problems
$$\begin{cases}v_{i,j,k}\in \sh_m(\Om)\cap\mathcal C^{\infty}(\Om)\\
(dd^cv_{i,j,k})^m\we\beta^{n-m}=f_{i,k}\beta^n\\
v_{i,j,k}|_{\partial\Om}=u_{i,j}.
\end{cases}$$
For those smooth functions  we can apply pointwise the G\aa rding
inequality to conclude that
$$dd^cv_{1,j,k}\we\cdots\we dd^cv_{m,j,k}\we\beta^{n-m}\geq(f_{1,k}
\cdots f_{m,k})^{\frac1m} \beta^n$$ for any $j,\ k\geq 1$. Then
given any non negative $f_{i}\in L^q(\Om),\ q> n/m$ we can find an
approximating sequence of smooth positive $\lbrace
f_{i,k}\rbrace_{k=1}^{\infty}$ which converge in $ L^q$ to $f_i$.
By the stability theorem the corresponding solutions $v_{i,j,k}$
(recall they the same boundary values $u_{i,j}$) converge
uniformly as $k\rightarrow\infty$ to the m-sh functions $v_{i,j}$
(solving the limiting weak equation), and hence the inequality
follows from the continuity of Hessian currents under uniform
convergence of their potentials. Now if we let
$j\rightarrow\infty$ the boundary vaules decrease towards $u_i$
and hence so do the functions $v_{i,j}$ by the comparison
principle. The convergence is not uniform but monotonicity is
still sufficient to guarantee the continuity and hence in the
limit we obtain the claimed inequality.
\end{proof}
\begin{remark} The weak G\aa rding inequality can be further generalized similarly to the $m=n$ case as in \cite{D1}.
\end{remark}
\section{$L^{\infty}$ estimates and existence of weak solutions
on compact K\"ahler manifolds}

The a priori estimates from the previous section can be carried
over to the case of compact K\"ahler manifolds as it was done in
\cite{KI} or \cite{K2} for the \MAE . Let us consider a compact
n-dimensional K\"ahler manifold $X$ equipped with the fundamental
form $\om$ and  recall that a continuous function $u$ is
$\om-m$-subharmonic (shortly: $\om-m$-sh) on $X$ if
$$
(\om +dd^c u)^k \we \om ^{n-k} \geq 0, \ \ k=1, 2, ... , m .
$$
The set of such functions is denoted by $\mathcal{SH} _m (X, \om
).$ We study the complex  m-Hessian equation
\begin{equation}\label{3} (\om +dd^c u)^m \we \om ^{n-m} = f\om
^n
\end{equation}
with given nonnegative function $f\in L^1 (M) $, which is
normalized by the condition
$$
\int_X f\, \om ^n  =\int_X \om ^n .
$$
The solution is required to be $\om-m$-sh. By the result of Hou
\cite{Hou} the solutions of the equation, for smooth positive $f$,
exist on manifolds with nonnegative holomorphic bisectional
curvature. In that case our a priori estimates will also give the
existence of weak solutions for $f\geq 0$ in $L^q ,\ q>n/m.$

We define for a compact set $K\subset X$  its capacity
$$
cap _m (K)=\sup \{ \int _K (\om +dd^c u)^m \we \om ^{n-m}  : u \in
\mathcal{SH} _m (X, \om ), 0\leq u \leq 1 \}.
$$
To use the local results we need also a capacity defined as
follows. Let us consider two finite coverings by strictly
pseudoconvex sets $\{ B_s \} ,\ \{ B'_s \} , \ s=1,2, ... ,N$ of
$X$ such that $\bar{B}'_s \subset B_s $ and in each $B_s$ there exists
$v_s \in PSH (B_s )$ with $dd^c
v_s =\om $ and $v_s =0$ on $\partial B_s .$ Given a compact set
$K\subset X$ define $K_s =K \cap \overline{B'_s}$. Set
$$cap' _m(K)
= \sum _s cap (K_s ,B_s ),$$
 where $cap_m (K,B)$  denotes the relative capacity from the previous section.
As in \cite{KI} one can show that $cap _m (K)$ is comparable with
$cap' _m (K)$: There exists $C>0$ such that
$$
\frac{1}{C} cap _m (K) \leq cap' _m (K) \leq C cap _m (K).
$$
Hence, by Proposition \ref{volcapestimate} we have
$$
V (K) \leq C(p,X) cap _m^p  (K),
$$
for $p<\frac{n}{n-m}$ and $V$ the volume measured by $\om ^n$.

With this estimate at our disposal we can obtain the same a priori
estimates as in domains in $\mathbb C^n$. The proofs are almost
identical. In the compact setting one has to make sure that
instead of just a sum of $m$-sh functions one considers a convex
combination of $\om-m$-sh functions (see \cite{K2}). In particular
the following  theorems hold.

\begin{theorem}\label{stabilityK} Consider $q>n/m $, its  conjugate
$q'$ and  $p'\in (q' , n/(n-m))$. Write $p=p'/q'
>1.$ Consider $u,v \in \mathcal{SH} _m (X, \om )$ such that  $\{ u<v\} \neq
\emptyset $ and
$$
 (\om + dd^c u)^m \we \om ^{n-m} = f\om ^n
$$
for some $f\in L^q (dV).$ Then
$$
\sup (v-u) \leq c(p',q, ||f||_{L^q (X)} )||(v-u)_+
||_{q'}^{\frac{p}{n+p(m+1)} } , \ \ (v-u)_+ :=\max (v-u, 0).
$$
\end{theorem}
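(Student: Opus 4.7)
The plan is to imitate the proof of Theorem \ref{stability} step by step, using the manifold analogs of its ingredients: the volume-capacity estimate $V(K)\leq C(p,X)\,cap_m^p(K)$ for $p<n/(n-m)$ deduced above from Proposition \ref{volcapestimate} through the comparison $cap_m\asymp cap'_m$, together with the compact comparison principle for $\sh_m(X,\om)$ recorded just after the \emph{K\"ahler setting} paragraph. H\"older's inequality combined with the volume-capacity bound yields, for every compact $K\subset X$,
$$\int_K f\,\om^n \;\leq\; \|f\|_{L^q(X)}\,V(K)^{1/q'} \;\leq\; C(p)\,\|f\|_{L^q(X)}\,cap_m^p(K).$$

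The heart of the proof is a compact version of Lemma \ref{kollemma}. Writing $E(s)=\{u+s<v\}$, $a(s)=cap_m(E(s))$ and $b(s)=\int_{E(s)}(\om+\dc u)^m\wedge\om^{n-m}$, the target inequality is $t^m a(s)\leq C\, b(s+t)$ on the appropriate range, from which the abstract iteration scheme of \cite{K1} gives
$$t_0 \;\leq\; C(p,\|f\|_{L^q})\,cap_m^{p/n}(E(s_0+t_0)).$$
To prove it, given $w\in\sh_m(X,\om)$ with $0\leq w\leq 1$ nearly attaining $cap_m(K)$ on $K\subset E(s)$, I replace the domain test function $w+\tfrac{1}{t}v$ (which would not be $\om$-$m$-sh) by a convex combination $\phi=(1-\alpha)v+\alpha w+c$ with $\alpha$ of order $t$ and a suitable constant $c$, so that $\phi\in\sh_m(X,\om)$ and the sublevel set $V=\{u<\phi\}$ still sandwiches between $E(2t)$ and $E(t)$. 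The compact comparison principle then transfers the Hessian mass of $\phi$ to that of $u$; the pointwise bound $(\om+\dc\phi)^m\wedge\om^{n-m}\geq\alpha^m(\om+\dc w)^m\wedge\om^{n-m}$, coming from positivity of convex combinations of positive forms, furnishes the required factor $t^m$.

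With this lemma in hand, I set $t=\tfrac{1}{2}\sup_X(v-u)$ and repeat the double-inclusion step of Theorem \ref{stability} to obtain
$$\tfrac{1}{2}\,cap_m(E(2t))\,t^m \;\leq\; \int_{E(t)} f\,\om^n \;\leq\; \|f\|_{L^q(X)}\,\|(v-u)_+\|_{L^{q'}(X)};$$
combining this with $t\leq C\,cap_m^{p/n}(E(t))$ and solving the resulting algebraic inequality for $t$ yields the announced exponent $p/(n+p(m+1))$. The main obstacle is calibrating the convex-combination parameters $\alpha$ and $c$ so that the inclusions $E(2t)\subset V\subset E(t)$ survive without introducing a dependence of the final constant on the oscillations of $v$ or $w$; this is precisely the adaptation of \cite{K2} that the authors indicate is needed in the compact K\"ahler case, and once it is done the rest of the argument is entirely parallel to the domain version.
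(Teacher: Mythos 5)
Your proposal matches the paper's own treatment: the authors give no separate argument for Theorem \ref{stabilityK}, stating only that the domain proofs carry over verbatim once the global volume--capacity inequality $V(K)\leq C(p,X)\,cap_m^p(K)$ is in place and sums of test functions are replaced by convex combinations of $\om$-$m$-sh functions (the exact adaptation you describe). One small slip in your write-up: the displayed bound should read $\int_{E(t)}f\,\om^n\leq \tfrac1t\|f\|_{L^q}\|(v-u)_+\|_{q'}$ (using $v-u>t$ on $E(t)$), which is what produces the exponent $p/(n+p(m+1))$ rather than $p/(n+pm)$.
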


\begin{corollary}\label{eqK}  The family of solutions of  the Hessian equation
(\ref{3}) with the
right hand sides uniformly bounded in $L^q$  $q>n/m $ are
equicontinuous.
\end{corollary}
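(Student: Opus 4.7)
My plan is to follow the Monge--Amp\`ere case in \cite{K2} p.~35, substituting Theorem \ref{stabilityK} for the corresponding MA stability estimate. The argument rests on three ingredients: a uniform $L^{\infty}$ bound on the family $\{u_f\}_{\|f\|_q\leq M}$, a regularization procedure for $\om$-$m$-sh functions, and the stability estimate itself.

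First I would establish a uniform $L^{\infty}$ bound. Normalizing so that $\sup_X u_f = 0$ (equation (\ref{3}) is invariant under adding constants), apply Theorem \ref{stabilityK} with $v\equiv 0$; by the compact analogue of Proposition \ref{lpbound} the $L^{q'}$ norm of $-u_f$ is controlled by $M$, giving $\sup_X|u_f|\leq C(M,X,\om)$ uniformly in the family.

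Next, for each $\om$-$m$-sh function $u$ and each small $\delta>0$ I would construct a continuous $\om$-$m$-sh regularization $u_\delta$ on $X$ with $u_\delta\searrow u$ as $\delta\downarrow 0$ and with an explicit rate $\|u_\delta-u\|_{L^1(X)}\leq C\delta$, plus a modulus of continuity $\eta(\delta)$ for $u_\delta$ depending only on $\delta$ and the geometry. On a K\"ahler manifold this is done by a Demailly-type averaging in normal coordinates with an $A\delta^2$ curvature correction, the constant $A$ depending only on $(X,\om)$; the parallel-transport invariance of the cones $\Gamma_k(\om)$ noted by Hou \cite{Hou} ensures the averaged function, after the correction, remains $\om$-$m$-sh. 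Applying Theorem \ref{stabilityK} to $u_f$ and $v=u_\delta-A\delta^2$, and using that $(v-u_f)_+\leq u_\delta-u_f$ is uniformly bounded by the first step, one can dominate its $L^{q'}$ norm by a positive power of $\|u_\delta-u_f\|_{L^1(X)}$. This yields
$$
\sup_X (u_\delta - u_f) \leq A\delta^2 + C(M)\, \delta^{\kappa}
$$
for some $\kappa=\kappa(q,m,n)>0$. Since the right hand side is independent of the particular solution $u_f$, combining this uniform approximation with the modulus of continuity $\eta(\delta)$ of $u_\delta$ produces a common modulus of continuity for the whole family $\{u_f\}$, which is exactly equicontinuity.

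The main obstacle is constructing the regularization $u_\delta$: one needs an averaging on a compact K\"ahler manifold that preserves $\om$-$m$-subharmonicity up to an $O(\delta^2)$ correction. For $m=n$ this is the classical Demailly regularization; for general $m$ the essential new point is that the relevant cones of Hermitian matrices must be compared at nearby points, which is supplied by the parallel-transport invariance of $\Gamma_m(\om)$ from \cite{Hou}. Once such a $u_\delta$ is available, the remainder of the argument is a direct transcription of the Monge--Amp\`ere proof in \cite{K2}.
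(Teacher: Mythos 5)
Your proposal follows essentially the route the paper intends: the paper gives no separate argument for this corollary, but (via Corollary \ref{eq} and the remark that the compact proofs are ``almost identical'') points to the Monge--Amp\`ere scheme of \cite{K2}, p.~35 --- uniform $L^{\infty}$ bound, regularization with controlled modulus of continuity, and the stability estimate of Theorem \ref{stabilityK} to force $\sup_X(u_\delta-u_f)\to 0$ uniformly over the family. Your uniform bound (Theorem \ref{stabilityK} with $v\equiv 0$ plus the compact analogue of Proposition \ref{lpbound}, which follows from the volume--capacity inequality and the fixed total mass $\int_X\om^n$) and the final optimization over $\delta$ are exactly that argument.

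The step you flag as ``the main obstacle'' is, however, a genuine gap and not merely a technical footnote: the paper itself observes (end of Proposition \ref{aaaa}) that convolution fails to preserve $m$-$\om$-subharmonicity for a non-flat $\om$, and neither the paper nor \cite{KI}/\cite{K2} supplies a Demailly-type regularization for $m$-$\om$-sh functions. Convexity of $\Gamma_m$ (G\aa rding) and Hou's parallel-transport invariance of $\Gamma_m(\om)$ make your averaging construction plausible, but the quantitative claims you rely on --- loss of positivity $O(\delta^2)$ absorbed by a constant shift (note that subtracting the constant $A\delta^2$ does not restore positivity; one must rescale $u_\delta$ by $(1+A\delta^2)^{-1}$ or correct by $A\delta^2$ times a potential), a uniform Lipschitz bound of order $\|u\|_\infty/\delta$, and $\|u_\delta-u\|_{L^1}=O(\delta^{\kappa})$ --- constitute a lemma that must be proved, not quoted. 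So your write-up correctly isolates the missing ingredient but does not close it; to be fair, the paper's one-line reference to \cite{K2} leaves exactly the same hole. For the application to Theorem \ref{weaksolK} the issue is not avoidable by smoothness of the approximating solutions either, since Hou's estimates depend on derivatives of $f$ and hence do not give a modulus of continuity controlled by $\|f\|_{L^q}$ alone.
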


Applying the theorem of Hou \cite{Hou} and the above statements
one immediately gets the following existence theorem.

\begin{theorem}\label{weaksolK} Let $X$ be a compact K\"ahler manifold
 with nonnegative holomorphic bisectional
curvature. Then for $q>n/m $ and $f\in L^q (dV)$  there exists a
unique function $u\in \mathcal{SH} _m (X, \om )$ satisfying
$$
 (\om + dd^c u)^m \we \om ^{n-m} = f\om ^n
$$
and $\max u =0.$
\end{theorem}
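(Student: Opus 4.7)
The approach is a standard approximation argument, parallel to the Monge--Amp\`ere treatment in \cite{KI} and \cite{K2}. First I dispense with uniqueness: if $u, v \in \mathcal{SH}_m(X,\om)$ both solve the equation with $\max u = \max v = 0$, two applications of Theorem \ref{stabilityK} (with $f = g$, first to bound $(v-u)_+$, then $(u-v)_+$) force $\sup(v-u) = \sup(u-v) = 0$, so $u \equiv v$.

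For existence, choose smooth strictly positive $f_j$ converging to $f$ in $L^q(X)$ and renormalized so that $\int_X f_j\,\om^n = \int_X \om^n$ (e.g.\ by mollifying $\max(f,1/j)$ and rescaling). Hou's theorem \cite{Hou} then supplies smooth solutions $u_j \in \mathcal{SH}_m(X,\om)$ of $(\om + dd^c u_j)^m \we \om^{n-m} = f_j \om^n$ with $\max_X u_j = 0$. Applying Theorem \ref{stabilityK} with the test function $v \equiv 0$ gives
$$-\min_X u_j \le c\bigl(\|f_j\|_{L^q}\bigr)\, \|u_j\|_{L^{q'}(X)}^{p/(n+p(m+1))},$$
so a uniform $L^\infty$ bound on $u_j$ will follow once a uniform $L^{q'}$ bound is established. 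This in turn rests on the standard $L^1$-compactness of the class $\{w \in \mathcal{SH}_m(X,\om) : \sup w = 0\}$ (upper semicontinuity plus bounded total Hessian mass on the compact manifold), upgraded to $L^{q'}$ via the compact-manifold version of Proposition \ref{lpbound}, valid because $q' < n/(n-m) < mn/(n-m)$.

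Once $\|u_j\|_{L^\infty(X)}$ is bounded uniformly in $j$, Corollary \ref{eqK} gives equicontinuity of the family, so Arzel\`a--Ascoli produces a subsequence converging uniformly on $X$ to a continuous function $u$ with $\max u = 0$; uniform limits of $\om$-$m$-sh functions are again $\om$-$m$-sh. The Hessian currents $(\om + dd^c u_{j_k})^m \we \om^{n-m}$ converge weakly to $(\om + dd^c u)^m \we \om^{n-m}$ by continuity under uniform convergence of potentials, and $f_{j_k}\om^n \to f\om^n$ in $L^1$, so $u$ solves the equation.

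The one step requiring genuine care is the uniform $L^\infty$ bound, because the $L^1$-compactness of $\om$-$m$-sh functions with fixed supremum and the upgrade to $L^{q'}$ via the capacity--volume estimate must be checked in the present non-pluripotential setting, where several tools familiar from the plurisubharmonic case (invariance under holomorphic maps, convexity of cones of translates) are unavailable for $m<n$. Once this uniform bound is secured, the remaining extraction and limit passage are routine consequences of the material of this section combined with \cite{Hou}.
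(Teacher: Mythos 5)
Your existence argument is exactly the route the paper intends (its own ``proof'' is a one-line reference to Hou's theorem plus the a priori estimates of this section, mirroring the proof of Theorem \ref{weaksol} in the domain case): approximate $f$ in $L^q$ by smooth, positive, normalized $f_j$, solve by \cite{Hou}, obtain uniform $L^\infty$ bounds from Theorem \ref{stabilityK} applied with $v\equiv 0$ together with a uniform $L^{q'}$ bound coming from the capacity--volume estimate, and then conclude via Corollary \ref{eqK}, Arzel\`a--Ascoli and the continuity of the Hessian operator under uniform convergence. That part is correct and matches the paper.

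The uniqueness step, however, does not work as written. Theorem \ref{stabilityK} contains no second density $g$: it bounds $\sup(v-u)$ by $c\,\|(v-u)_+\|_{q'}^{\,p/(n+p(m+1))}$, and the exponent $p/(n+p(m+1))$ is strictly less than $1$. If $u$ and $v$ both solve the same equation with $\max u=\max v=0$ and $M:=\sup(v-u)>0$, the most you can extract is $M\le c\,\bigl(M\,V(X)^{1/q'}\bigr)^{p/(n+p(m+1))}$, which gives only an a priori upper bound on $M$, not $M=0$; the argument is circular unless you already know $\|(v-u)_+\|_{q'}=0$. You appear to be conflating Theorem \ref{stabilityK} with Theorem \ref{stabilitybis}, whose right-hand side $\sup_{\partial\Om}(v-u)+c\|f-g\|_{L^q}^{1/m}$ does vanish when $f=g$ --- but that theorem is stated and proved only for domains, and its boundary term is essential; no boundaryless analogue appears in the paper. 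Uniqueness on the compact manifold needs a separate argument, e.g.\ the integration-by-parts/energy method used for the Monge--Amp\`ere equation on compact K\"ahler manifolds, or an argument exploiting the concavity of $S_m^{1/m}$ applied to $\tfrac12(u+v)$ together with the fixed total mass $\int_X(\om+dd^cw)^m\we\om^{n-m}=\int_X\om^n$. As it stands, this is a genuine gap in your proof.
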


\section{Local regularity}

In this section we prove a counterpart of the main result in B\l
ocki-Dinew [BD], where the case of the \MAE\  was studied. We
shall treat only the $\om =\beta $ case and use PDE notation (with
$\sigma_m$ defined in Introduction).

\begin{theorem}\label{lore} Assume that $n\geq 2$ and $p>n(m-1)$. Let $u\in
W^{2,p}(\Omega)$, where $\Omega$ is a domain in $\mathbb C^n$, be a
m-subharmonic solution of
  \begin{equation}\label{equation4}\sigma_m\big(u_{z_j\bar z_k}\big)=\psi>0.
\end{equation}
Assume that $\psi\in C^{1,1}(\Omega)$. Then for
$\Omega'\Subset\Omega$
  $$\sup_{\Omega'}\Delta u\leq C,$$
where $C$ is a constant depending only on $n$, $m$, $p$, $\text{\rm
dist}(\Omega',\partial\Omega)$, $\inf_\Omega\psi$, $\sup_\Omega\psi$,
$||\psi||_{C^{1,1}(\Omega)}$ and $||\Delta u||_{L^p(\Omega)}$.
\end{theorem}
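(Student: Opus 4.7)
The plan is to adapt the argument of B\l ocki--Dinew \cite{BD} from the Monge--Amp\`ere case ($m=n$) to the Hessian setting. The heart of that argument is a pointwise differential inequality for $\log\Delta u$ against the linearized operator, combined with a Moser-type iteration that converts an $L^p$ hypothesis into an $L^\infty$ bound; the iteration scheme closes precisely at the threshold $p>n(m-1)$. As the first step I would regularize by mollification --- which preserves $m$-subharmonicity by Proposition \ref{aaaa}(4) --- and argue with smooth $u$ satisfying \eqref{equation4} with smooth, strictly positive right-hand side, taking care that all constants are independent of the regularization parameter so that the final passage to the limit is routine.

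With smooth $u$ in hand, introduce the linearized operator
\[
\mathcal L=F^{j\bar k}\partial_j\partial_{\bar k},\qquad F^{j\bar k}=\frac{\partial S_m}{\partial u_{j\bar k}},
\]
which is elliptic on $\Gamma_m$ and whose trace is
$\mathcal F:=\sum_j F^{j\bar j}=(n-m+1)\,S_{m-1}(u_{i\bar j})$.
Maclaurin's inequality (item 1 of the linear algebra preliminaries) gives the two-sided control
$c\,\psi^{(m-1)/m}\le\mathcal F\le C(\Delta u)^{m-1}$.
Differentiating \eqref{equation4} twice in real directions and invoking the concavity of $S_m^{1/m}$ on $\Gamma_m$ (G\aa rding, item 2), one obtains by the now-standard computation in \cite{Hou} the pointwise inequality
\[
\mathcal L(\log\Delta u)\ge -A\bigl(1+\mathcal F\bigr)
\]
at points where $\Delta u>0$, with $A$ depending only on $\inf_\Omega\psi$, $\sup_\Omega\psi$, $\|\psi\|_{C^{1,1}(\Omega)}$ and on $n,m$.

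Next, for a cutoff $\eta\in C_0^\infty(\Omega)$ with $\eta\equiv 1$ on $\Omega'$ and parameter $\alpha>1$, multiply the subsolution inequality by $\eta^{2N}(\Delta u)^{\alpha-1}$ and integrate against $\beta^n$. Integration by parts produces the positive \emph{good gradient term}
\[
\alpha(\alpha-1)\int_\Omega \eta^{2N}F^{j\bar k}(\Delta u)^{\alpha-2}(\Delta u)_j(\Delta u)_{\bar k}\,\beta^n,
\]
while the remainder is dominated by $\int(\eta^{2N-1}|\nabla\eta|+\eta^{2N})\,\mathcal F\,(\Delta u)^{\alpha-1}\,\beta^n$; using $\mathcal F\le C(\Delta u)^{m-1}$ and H\"older with the hypothesis $\Delta u\in L^p(\Omega)$, the remainder is absorbed at the cost of a constant depending on $\|\Delta u\|_{L^p}$. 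Iterating in $\alpha$ --- where Sobolev embedding converts the good gradient term into a higher $L^q$ norm of $\Delta u$ --- bootstraps $\Delta u\in L^p_{\rm loc}$ to $\Delta u\in L^\infty_{\rm loc}$ on $\Omega'$. The main obstacle, and the step that pins down the exponent, is the first iteration: the ``bad'' term scales as $(\Delta u)^{\alpha+m-2}$ through the factor $\mathcal F$, while H\"older against $\|\Delta u\|_{L^p}$ produces a companion integral of $(\Delta u)^{p(\alpha+m-2)/(p-1)}$; matching this against the power of $\Delta u$ recovered from the Sobolev inequality on the gradient term forces exactly $p>n(m-1)$, in perfect analogy with the threshold $p>n(n-1)$ of \cite{BD}. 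G\aa rding/Maclaurin concavity is essential throughout, because it is what keeps $\mathcal F$ controlled by a power of $\Delta u$ strictly below $\alpha$ at the relevant step, allowing the iteration to close.
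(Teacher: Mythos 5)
Your proposal diverges from the paper's proof in ways that open two genuine gaps. First, the regularization step does not deliver what you need: mollification preserves $m$-subharmonicity, but it does not preserve the equation --- by concavity of $S_m^{1/m}$ the mollified function only satisfies the \emph{inequality} $\sigma_m\big((u_{(\ep)})_{j\bar k}\big)\geq\big((\psi^{1/m})_{(\ep)}\big)^m$ --- and a $W^{2,p}$ solution cannot be differentiated twice more, so the pointwise inequality $\mathcal L(\log\Delta u)\geq -A(1+\mathcal F)$ (which requires two derivatives of the equation, hence $u\in C^4$) is simply not available. The paper circumvents exactly this by never differentiating the equation: it works with the mean-value operator $T_\ep(u)=\frac{n+1}{\ep^2}(u_{(\ep)}-u)$ and derives the one-sided bound $\frac{\partial\sigma_m}{\partial u_{i\bar j}}T_{\ep,i\bar j}\geq -C_1$ from the weak G\aa rding inequality (Theorem \ref{weak}) applied to $(dd^cu)^{m-1}\wedge dd^cu_{(\ep)}\wedge\beta^{n-m}$, which uses only the measure-theoretic inequality above.

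Second, and more fundamentally, the Moser iteration you describe does not close. The linearized operator $F^{j\bar k}\partial_j\partial_{\bar k}$ is only degenerate elliptic: Maclaurin controls its trace $\mathcal F$ from below by a power of $\psi$, but gives no lower bound on the smallest eigenvalue of $(F^{j\bar k})$, and converting the good gradient term $\int\eta^{2N}F^{j\bar k}(\Delta u)^{\alpha-2}(\Delta u)_j(\Delta u)_{\bar k}$ into a higher $L^q$ norm of $\Delta u$ via Sobolev embedding requires precisely such uniform ellipticity (or an ABP-type substitute, unavailable in the complex setting). This is also not the argument of \cite{BD}: there, as here, the iteration is replaced by a single application of a pluripotential-theoretic $L^\infty$ stability estimate. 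Concretely, the paper tests with $w=\eta T^\alpha$, $\eta=(1-|z|^2)^\beta$, estimates $\|(F^{i\bar j}w_{i\bar j})_-\|_{qm}$ using the $L^{p/(m-1)}$ control of the $(m-1)$-cominors, solves the auxiliary Dirichlet problem $\sigma_m(v_{i\bar j})=\big((F^{i\bar j}w_{i\bar j})_-\big)^m$ (Theorem \ref{weaksol}), compares $w\leq -C_4v$ via the weak G\aa rding inequality and the maximum principle, and bounds $\|v\|_\infty$ by the stability theorem (Theorem \ref{stabilitybis}); the choice of $\alpha,\beta$ then yields $\sup w\leq C(1+(\sup w)^{1-2/\beta})$ and hence the conclusion. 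The threshold $p>n(m-1)$ arises from the need for an exponent $q$ with $n/m<q<p/(m(m-1))$ so that both the stability theorem and the H\"older bound on the cominors apply --- not from matching exponents in a Sobolev iteration. Without the stability theorem, or an equivalent replacement for the missing uniform ellipticity, your scheme cannot be completed.
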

\begin{proof}
By $C_1,C_2,\dots$ we will denote possibly different constants
depending only on the required quantities. Without loss of
generality we may assume that $\Omega=B$ is the unit ball in $\mathbb
C^n$ and that $u$ is defined in some neighborhood of $\bar B$. We
will use the notation $u_j=u_{z_j}$, $u_{\bar j}=u_{\bar z_j}$ with the notable
exception of $u_{(\ep)}$ which is defined below.

Let us define, following \cite{BT}, the Laplacian approximating
operator
  $$T=T_\ep(u)=\frac{n+1}{\ep^2}(u_{(\ep)}-u),$$
where
  $$u_{(\ep)}(z)=\frac 1{V(B(z,\ep))}\int_{B(z,\ep)}u\,dV.$$
 Since
$T_\ep u\to\Delta u$ weakly as $\ep\to 0$, it is enough to show a
uniform upper bound for $T$ independent of $\ep$. Observe that since $u$ is subharmonic we have $T_\ep(u)\geq 0$.

Before we continue let us state two  lemmas. The first one is
classical.
\begin{lemma}\label{second} Let $u\in
W^{2,p}(\Omega)$ ( $\Omega$ is a domain in $\mathbb C^n$) be a subharmonic
function. Given any $\Omega'\Subset\Omega$
the operator $T_\ep(z)$ is well defined on  $\Omega'$ for any sufficiently
small $\ep>0$. Furthermore
$$||T_\ep||_{L^p(\Omega')}\rightarrow ||\Delta u||_{L^p(\Omega')}$$
in particular $||T_\ep||_{L^p(\Omega')}$ is uniformly bounded for all
$0<\ep<\ep_0$.
\end{lemma}

\begin{lemma}\label{third} The function $T_\ep(u)(z)$ for any $\ep>0$ satisfies the following
subharmonicity condition:
$$\frac{\partial\sigma_m\big(u_{j\bar k}\big)}{\partial u_{i\bar{j}}}
T_{\ep,i\bar{j}}\geq  -C_1,$$
where $\frac{\partial\sigma_m\big(u_{j\bar k}\big)}{\partial u_{i\bar{j}}}$
is the $(i,j)$-th (m-1)-cominor of the matrix $u_{i\bar{j}}(z)$
 and $C_0$ is a constant dependent only on $n$, $m$,  $\inf_\Omega\psi$,
 $\sup_\Omega\psi$, and
$||\psi||_{C^{1,1}(\Omega)}$ .
\end{lemma}
\begin{proof}
Observe that $u_{(\ep)}$ is a convex combination of $m$-subharmonic
functions, hence it is m-subharmonic. Therefore one has the
inequality
$$(\dc u_{(\ep)})^{m}\we\om^{n-m}\geq 0.$$
In fact following the lines of the same argument in \cite{BT}
(where it was applied to the Monge-Amp\`ere operator) one can
prove the stronger inequality
\begin{equation}\label{convolutionhess}
(\dc u_{(\ep)})^{m}\we\om^{n-m}\geq ((\psi^{1/m})_{(\ep)})^m.
\end{equation}
Indeed, for smooth $u$ this is just a consequence of the concavity of $\sigma_m^{1/m}$. For nonsmooth solutions one can repeat the Goffman-Serrin formalism just as in \cite{BT}.

Thus using the weak G\aa rding inequality (Theorem \ref{weak}) one
has
\begin{equation*}
(\dc u)^{m-1}\we\dc u_{(\ep)}\we\om^{n-m}\geq \psi^{(m-1)/m} (\psi^{1/m})_{(\ep)} dV.
\end{equation*}
Next, identifying $(n,n)$ forms and their densities one gets, up
to a multiplicative numerical constant $c_{n,m},$ the following
string of inequalities
\begin{align*}
&\frac{\partial\sigma_m\big(u_{j\bar k}\big)}{\partial
u_{i\bar{j}}}
T_{\ep,i\bar{j}}=c_{n,m}1/\ep^2\dc( u_{(\ep)}-u)\we(\dc u)^{m-1}\we\om^{n-m}\\
\geq &c_{n,m}1/\ep^2
\psi^{(m-1)/m}((\psi^{1/m})_{(\ep)}-\psi^{1/m})=c_{n,m} \psi^{(m-1)/m}
T_\ep(\psi^{1/m}).
\end{align*}
But $\psi$ is a strictly positive $\mathcal C^{1,1}$ function hence
$T_\ep(\psi^{1/m})\geq-C_1(||\psi||, ||\psi^{1/m}||_{C^{1,1}})$. Combining
all those inequalities we obtain the claimed estimate.
\end{proof}

>From now on we drop the indice $\ep$ in what follows. We will use
the same calculations as in \cite{BD} which in turn relied on
\cite{Tr2}. For some $\alpha,\beta\geq 2$ to be determined later
set
  $$w:=\eta(T)^\alpha,$$
where
  $$\eta(z):=(1-|z|^2)^\beta .$$
Then
 $$w_i=\eta_i(T)^\alpha
     +\alpha\eta(T)^{\alpha-1}(T)_i$$
and
  \begin{align*} \frac{\partial\sigma_m\big(u_{j\bar k}\big)}{\partial
  u_{i\bar{j}}}w_{i\bar j}
     &=\alpha\eta(T)^{\alpha-1}\frac{\partial\sigma_m\big(u_{j\bar k}
     \big)}{\partial u_{i\bar{j}}}(T)_{i\bar j}
        +\alpha(\alpha-1)\eta(T)^{\alpha-2}
              \frac{\partial\sigma_m\big(u_{j\bar k}\big)}{\partial
              u_{i\bar{j}}}(T)_i(T)_{\bar j}\\
     &\ \ \ \ \ \ \ \ +2\alpha(T)^{\alpha-1}\text{Re}
               \big(\frac{\partial\sigma_m\big(u_{j\bar k}\big)}
               {\partial u_{i\bar{j}}}\eta_i(T)_{\bar j}\big)
         +(T)^\alpha \frac{\partial\sigma_m\big(u_{j\bar k}\big)}
         {\partial u_{i\bar{j}}}\eta_{i\bar j}.
\end{align*}
By Lemma \ref{third}   and the Schwarz inequality
for $t>0$
 \begin{align*} &\frac{\partial\sigma_m\big(u_{j\bar k}\big)}
 {\partial u_{i\bar{j}}}w_{i\bar j}
     \geq-C_1\alpha\eta(T)^{\alpha-1}
        +\alpha(\alpha-1)\eta(T)^{\alpha-2}
            \frac{\partial\sigma_m\big(u_{j\bar k}\big)}
 {\partial u_{i\bar{j}}}(T)_i(T)_{\bar j}\\
     & -t\alpha(T)^{\alpha-1}
          \frac{\partial\sigma_m\big(u_{j\bar k}\big)}
 {\partial u_{i\bar{j}}}(T)_i(T)_{\bar j}
        -\frac 1t\alpha(T)^{\alpha-1}\frac{\partial\sigma_m\big(u_{j\bar k}
        \big)}{\partial u_{i\bar{j}}}\eta_i\eta_{\bar j}
         +(T)^\alpha \frac{\partial\sigma_m\big(u_{j\bar k}\big)}
         {\partial u_{i\bar{j}}}\eta_{i\bar j}.
\end{align*}
Therefore with $t=(\alpha-1)\eta/T$ we get
  $$\frac{\partial\sigma_m\big(u_{j\bar k}\big)}{\partial
  u_{i\bar{j}}}w_{i\bar j}
      \geq-C_1\alpha\eta(T)^{\alpha-1}
         +(T)^\alpha \frac{\partial\sigma_m\big(u_{j\bar k}\big)}
         {\partial u_{i\bar{j}}}\left(\eta_{i\bar j}
            -\frac\alpha{\alpha-1}\frac{\eta_i\eta_{\bar j}}\eta\right).$$
We now have
 \begin{align*}\eta_i&=-\beta z_i\eta^{1-1/\beta}\\
      \eta_{i\bar j}&=-\beta\delta_{i\bar j}\eta^{1-1/\beta}
         +\beta(\beta-1)\bar z_iz_j\eta^{1-2/\beta},
\end{align*}
and thus  $$\big|\eta_{i\bar j}\big|,\ \big|\frac{\eta_i\eta_{\bar j}}
\eta\big|\leq
      C(\beta)\,\eta^{1-2/\beta}.$$
Coupling the above inequalities we get
  $$\frac{\partial\sigma_m\big(u_{j\bar k}\big)}{\partial
  u_{i\bar{j}}}w_{i\bar j}\geq -C_2(T)^{\alpha-1}
         -C_3w^{1-2/\beta}(T)^{2\alpha/\beta}\sum_{i,j}|\frac{\partial
         \sigma_m\big(u_{j\bar k}\big)}{\partial u_{i\bar{j}}}|.$$
Fix $q$ with $n/m<q<p/m(m-1)$ (by our assumption on $p$ such a
choice is possible). By Lemma \ref{second} $||T||_p$ and $||\Delta
u||_p$ are under control. By Calderon-Zygmund inequalities we
control $||u_{i\bar j}||_p$ too. Observe that
$\frac{\partial\sigma_m\big(u_{j\bar k}\big)}{\partial
u_{i\bar{j}}}$ is a sum of of products of $m-1$ factors of the type
 $u_{i\bar j}$ and therefore $||\frac{\partial\sigma_m\big(u_{j\bar k}\big)}
 {\partial u_{i\bar{j}}}||_{p/(m-1)}$ is also under control.
It follows that for
 $$\alpha=1+\frac p{qm},\ \ \ \beta=2\big(\frac{qm+p}{p-qm(m-1)}\big)$$
we have
  $$||(\frac{\partial\sigma_m\big(u_{j\bar k}\big)}{\partial
  u_{i\bar{j}}}w_{i\bar j})_-||_{qm}
       \leq C_3(1+(\sup_B\,w)^{1-2/\beta}),$$
where $f_-:=-\min(f,0)$.
By Theorem \ref{weaksol} we can find continuous $m$-subharmonic $v$ vanishing
on
$\partial B$ and such that
  $$\sigma_m(v_{i\bar j})=((u^{i\bar j}w_{i\bar j})_-)^m.$$
Then the weak G\aa rding inequality yields
\begin{align*} &\frac{\partial\sigma_m\big(u_{j\bar k}\big)}
 {\partial u_{i\bar{j}}}v_{i\bar j}
     =c_{n,m}(\dc u)^{m-1}\we\dc v\we\om^{n-m}\\ \geq & c_{n,m}
     (\sigma_m(u_{i\bar j})^{(m-1)/m}(\sigma_m(v_{i\bar j}))^{1/m}
     \geq 1/C_4( \frac{\partial\sigma_m\big(u_{j\bar k}\big)}
     {\partial u_{i\bar{j}}}w_{i\bar j})_-\\
    & \geq-\frac 1{C_4} \frac{\partial\sigma_m\big(u_{j\bar k}\big)}
     {\partial u_{i\bar{j}}}w_{i\bar j}.
\end{align*}
By maximum prinicple we obtain that  $w\leq -C_4v$, since this
inequality holds on $\partial B$. Applying the stability theorem
(Theorem \ref{stabilitybis}), with $u=0$, we get
 \begin{align*}\sup_Bw\leq C_4||v||&\leq C_5(||\sigma_m(v_{i\bar j})||_q^{1/m})
     =C_5||( \frac{\partial\sigma_m\big(u_{z_j\bar z_k}\big)}{\partial
     u_{i\bar{j}}}w_{i\bar j})_-||_{qn}\\
     &\leq C_6(1+(\sup_B\,w)^{1-2/\beta}).
\end{align*}
Therefore $w\leq C_7$ and thus  $$T^{\alpha }\leq\frac
{C_7}{\eta}$$ which is the desired bound.

\end{proof}
\begin{remark} The analogous reasoning can be applied also to the real m-Hessian
equation (using Wang stability theorem and existence of weak solutions). It
 turns out that for $m<n/2$ the corresponding exponent in the $W^{2,p}$ Sobolev
 space is equal to $n(m-1)/2$. Observe that this improves the $m(n-1)/2$ exponent
obtained by different methods by Urbas \cite{U}. Whether this
exponent is optimal is however still unclear and would require
construction of suitable Pogorelov type Hessian examples.
\end{remark}

\noindent {\bf Acknowledgements}.  The  authors were
partially supported by NCN grant 2011/01/B/ST1/00879.  The first
author was also supported by Polish ministerial grant ``Iuventus Plus''
and Kuratowski fellowship granted by the Polish Mathematical
Society (PTM) and Polish Academy of Science (PAN).

{ Rutgers University, Newark, NJ 07102, USA;\\  Faculty of Mathematics and Computer Science,  Jagiellonian University 30-348 Krakow, Lojasiewicza 6, Poland;\\ e-mail: {\tt slawomir.dinew@im.uj.edu.pl}}\\ \\

{\noindent Faculty of Mathematics and Computer Science, Jagiellonian University 30-348 Krakow, Lojasiewicza 6, Poland;\\ e-mail: {\tt slawomir.kolodziej@im.uj.edu.pl}}\\ \\

\end{document}